%
%
%
%
%
\RequirePackage{fix-cm}
\documentclass[smallextended]{svjour3}       
\smartqed  
\usepackage{graphicx}
\usepackage{mathptmx}      
%
\usepackage{amsmath,amssymb,stmaryrd}
\usepackage{color}
\usepackage{framed}
%
\newcommand{\LL}{\mathcal{L}}
\newcommand{\Th}{\mathcal{T}_h}
\newcommand{\Eh}{\mathcal{E}_h}
\DeclareMathOperator{\diam}{diam}
\newcommand{\ovec}{\overrightarrow}
\newcommand{\RR}{\mathbb{R}}
\newcommand{\new}[1]{\textcolor{black}{#1}}
%
\journalname{Numerische Mathematik}

\begin{document}

\title{A primal discontinuous Galerkin method with static condensation on very general 
meshes
}
\titlerunning{Discontinuous Galerkin method with static condensation }        

\author{
Alexei Lozinski
}


\institute{A. Lozinski \at
              Laboratoire de Math\'{e}matiques de Besan\c{c}on, CNRS UMR 6623, 
              Univ. Bourgogne Franche-Comt\'{e}, 16 route de Gray, 25030 
Besan\c{c}on Cedex, France. \\
              Tel.: +33 3 81 66 63 16\\
              Fax: +33 3 81 66 66 23\\
              \email{alexei.lozinski@univ-fcomte.fr}           
}

\date{Received: date / Accepted: date}

\maketitle

\begin{abstract}
We propose an efficient variant of a primal Discontinuous Galerkin method with interior penalty for the 
second order elliptic equations on very general meshes (polytopes with eventually curved boundaries).   Efficiency, especially when higher order polynomials are used, is achieved by static condensation, i.e. a local 
elimination of certain degrees of freedom cell by cell. This alters the 
original method in a way that preserves the optimal error estimates. Numerical 
experiments confirm that the solutions produced by the new method are indeed 
very close to that produced by the classical one.
\keywords{Discontinuous Galerkin \and static condensation \and polyhedral 
(polygonal) meshes}
\end{abstract}

\section{Introduction}
\label{intro}

The recent years have seen the emergence (or the revival) of several numerical methods capable to solve approximately elliptic partial differential equations using general polygonal/polyhedral meshes. This is witnessed for example by the book \cite{bridges}. The methods reviewed in this book (if we restrict our attention only to finite element type methods using piecewise polynomial approximation spaces in one form or another) include interior penalty discontinuous Galerkin (DG) methods \cite{cangiani14,antonietti16}, hybridizable discontinuous Galerkin (HDG) methods (\cite{cockburn16}, introduced in \cite{cockburn09}),  
the Virtual Element (VE) method (\cite{veiga16}, introduced in \cite{veiga13,brezzi13}), the Hybrid High-Order (HHO) method (\cite{dipietro16}, introduced in \cite{dipietro15,dipietro14}). One can add to this list the weak Galerkin finite element \cite{wang13}, which is similar to HDG. 
The relations between HHO and HDG methods were exhibited in \cite{cockburn_dipietro}.

Among the above, the primal interior penalty DG methods are the most classical. In the symmetric form, also referred to as SIP -- symmetric interior penalty, this method dates back to \cite{wheeler78,arnold82} and is now presented and thoroughly studied in several monographs, for example  \cite{riviere_book,dipietro_book}. It is well suited to the discretization on very general meshes because its approximation space is populated by polynomials of degree, say $\le k$, on each mesh cell without any constraints linking the polynomials on two adjacent cells. It leaves thus a lot of freedom on the choice of the mesh cells which can be not only polytopes but also virtually any geometrical shapes. It is generally admitted however that the SIP method is too expensive especially when higher order polynomials are employed. Indeed, its cost, i.e. the dimension of the approximation space, is the product of the number of mesh cells and the dimension of the space of polynomials of degree $\le k$. The cost on a given mesh is thus proportional to $k^2$ in 2D (resp. $k^3$ in 3D). This should be contrasted with the cost of HDG or HHO methods which is proportional to $k$ in 2D (resp. $k^2$ in 3D).    

The goal of the present article is to modify the SIP method so that its cost is reduced to that of HDG or HHO methods. In doing so, we inspire ourselves from the static condensation procedure for the standard continuous Galerkin (CG) finite element methods. It is indeed well known that the dimension of the approximation space in CG is proportional to $k^2$ in 2D on a given mesh, but the degrees of freedom interior to each mesh cell can be locally eliminated which leaves a global problem of the size proportional to $k$ (these numbers are changed to, respectively, $k^3$ and $k^2$ in 3D). Although the notion of interior degrees of freedom does not make sense in the DG context, we shall be able to select, on each mesh cell, a subspace of the approximating polynomials that can be used to construct a local  approximation through the solution of a local problem. The remaining degrees of freedom will then be used in a global problem. We shall thus achieve a significant reduction of the global problem size in the DG SIP-like method, similarly to that achieved in CG by static condensation. The resulting DG method, which can be refereed to as scSIP (static condensation SIP), will not produce exactly the same approximation as the original SIP method. We shall prove however that these two solutions satisfy the same optimal \textit{a priori} error bounds in $H^1$ and $L^2$ norms. Moreover, they turn out to be very close to each other in our numerical experiments. 

We treat here only the diffusion equation with variable, but sufficiently smooth, coefficients. The extension to other problems, such as convection-reaction-diffusion, linear elasticity, Stokes, as well as to other DG variants (IIP, NIP) seems relatively straight-forward. Our assumptions on the mesh allow for cells of general shape, not necessarily the polytopes. 

The article is organized as follows: in the next section, we present the idea of our method starting by the description of the governing equations. We the recall the static condensation for the classical CG FEM. Our variants of DG FEM (SIP and scSIP) are 
first introduced in Subsection \ref{sec2}.2. The convergence proofs are in Section \ref{sec3}. 
They are done assuming some properties of the discontinuous FE 
spaces and the underlying mesh. In Section \ref{SecMesh}, we give an example of the hypotheses on the mesh under 
which the necessary properties of the FE spaces can be established. Finally, some implementation details and numerical illustrations are presented in Section \ref{sec5}.

\section{Description of the problem and static condensation for FEM (CG and DG cases)}
\label{sec2}

We consider the second-order elliptic problem
\begin{equation}
  \label{Pb} {\LL u} = f \text{ in } \Omega , \quad u = g \text{ on } \partial \Omega
\end{equation}
where $\Omega \subset \mathbb{R}^d$, $d = 2$ or 3, is a bounded Lipschitz 
domain, $f \in L^2 (\Omega)$ and $g \in H^{1 / 2} (\partial \Omega)$ are given
functions. The differential operator $\LL$ is defined by
\[ {\LL u} = - \partial_i (A_{{ij}} (x) \partial_j u) \]
with $\partial_i$ denoting the partial derivative in the direction $x_i$, $i =
1, \ldots, d$ and assuming the summation over $i,j$. The coefficients $A_{{ij}}$ are supposed to form a
positive definite matrix $A=A(x)$ for any $x \in \Omega$ which is sufficiently smooth
with respect to $x$ so that
\begin{equation}
  \label{regel1} \alpha | \xi |^2 \leq \xi^T A (x) \xi \leq \beta |
  \xi |^2, \hspace{1em} \forall \xi \in \mathbb{R}^d, x \in \Omega
\end{equation}
and
\begin{equation}
  \label{regel2} | \nabla A_{{ij}} (x) | \leq M, \hspace{1em}
  \forall x \in \Omega, i, j = 1, \ldots, d
\end{equation}
with some constants $\beta \geq \alpha > 0$, $M > 0$.

\subsection{Static condensation for CG FEM}
To present our idea, we start by recalling the idea of static condensation, 
going back to \cite{guyan65}, as applied to the usual
CG finite element method for problem (\ref{Pb}).
Let us assume for the moment (in this subsection only) that $\Omega$ is a polygon (polyhedron) and
introduce a conforming mesh $\mathcal{T}_h$ on $\Omega$ consisting of
triangles (tetrahedrons). Assuming for simplicity $g = 0$, the usual continuous
$\mathbb{P}_k$ finite element discretization of (\ref{Pb}) is then written:
find $u_h \in W_h$ such that
\begin{equation}
  \label{PbPkc} a (u_h, v_h) : = \int_{\Omega} A \nabla u_h \cdot \nabla v_h =
  \int_{\Omega} fv_h, \quad \forall v_h \in W_h
\end{equation}
where $W_h$ is the space of continuous piecewise polynomial functions
(polynomials of degree $\leq k$ on each mesh cell $T \in \mathcal{T}_h$
for some $k \geq 1$) vanishing on $\partial \Omega$. The size of this
problem, i.e. the dimension of $W_h$, is of order $k^2$ on a given mesh in 2D
(resp. $k^3$ in 3D). To reduce this cost, one can decompose the space $W_h$ as
follows
\[ W_h = W_h^{{loc}} \oplus W'_h  \]
where the subspace $W_h^{{loc}}$ consists of functions of $W_h$ that
vanish on the boundaries of all the mesh cells $T \in \mathcal{T}_h$, and
$W'_h$ is the complement of $W_h^{{loc}}$, orthogonal with respect to the
bilinear form $a$. Decomposing $u_h = u_h^{{loc}} + u_h'$ with
$u_h^{{loc}} \in W_h^{{loc}}$ and $u'_h  \in W_h'$ we see that
(\ref{PbPkc}) is split into two independent problems
\begin{align}
  \label{Pbloc} u_h^{{loc}} \in W_h^{{loc}} &: & a
  (u_h^{{loc}}, v_h^{{loc}}) &= \int_{\Omega}
  {fv}^{{loc}}_h, \hspace{1em} \forall v^{{loc}}_h \in
  W^{{loc}}_h
\\
  \label{Pbglob} u_h' \in W_h' &: & a (u_h', v_h') &= \int_{\Omega}
  {fv}'_h, \hspace{1em} \forall v'_h \in W'_h
\end{align}
The first problem above is further split into a collection of
mutually independent problems on every mesh cell $T \in \mathcal{T}_h$:
\begin{equation}\label{PblocT}
 \text{Find }u^{{loc,T}}_h \text{ such that }
 \int_T A \nabla u^{{loc,T}}_h \cdot \nabla v^{{loc}, T}_h =
\int_{\Omega} {fv}^{{loc}, T}_h, \quad \forall v^{{loc}, T}_h \in
W^{{loc}, T}_h 
\end{equation}
where $W^{{loc}, T}_h$ is the restriction of $W^{{loc}}_h$ on $T$,
i.e. the set of all polynomials of degree $\leq k$
vanishing on $\partial T$. The cost of solution of these local problems is
negligible and we thus get very cheaply $u^{{loc}}_h|_T=u^{{loc}, T}_h$. Note 
also that Problem  (\ref{PblocT}) can be recast as
\begin{equation}
  \label{piloc} \pi_T \LL(u^{{loc}}_h |_T) = \pi_T f ,
  \hspace{1em} \forall T \in \mathcal{T}_h
\end{equation}
where $\pi_T$ is the projection to $W^{{loc}, T}_h$, orthogonal in $L^2
(T)$. 

On the other hand, Problem (\ref{Pbglob}) remains global but its
size is only proportional to $k$ in 2D (resp. $k^2$ in 3D) which is much
smaller than that of the original problem (\ref{PbPkc}). Indeed, the degrees of 
freedom are associated to the standard interpolation points 
of $\mathbb{P}^k$ finite elements on the edges of the mesh. Note also that a
basis for $W_h'$ can be constructed solving cheap local problems of the type
\begin{equation}
  \label{piglob} \pi_T \LL(v _h' |_T) = 0 , \hspace{1em}
  \forall T \in \mathcal{T}_h
\end{equation}
with appropriate boundary conditions on $\partial T$ insuring the continuity
of functions in $W_h'$.

\subsection{DG FEM: SIP and scSIP methods}
We turn now to the main subject of this paper: the DG methods. We now let $\Omega$ be a
bounded domain of general shape, and $\mathcal{T}_h$ be a
splitting of $\Omega$ into a collection of non-overlapping subdomains (again
of general shape, the precise definitions and assumptions on the mesh will be 
given Sections \ref{sec3} and \ref{SecMesh}). Let $V_h$ denote the space of
discontinuous piecewise polynomial functions of degree $\leq
k$ on each mesh cell $T \in \mathcal{T}_h$ for some $k \geq 2$:\footnote{\new{The usual SIP DG method makes perfect sense also for piecewise linear polynomials ($k=1$). We restrict ourselves however to $k\ge 2$ since the forthcoming modification of the method allowing for the static condensation is pertinent to this case only.}}
\begin{equation}\label{spacek} 
  V_h = \{v \in L^2 (\Omega) : v|_T \in \mathbb{P}_k (T), \forall T \in 
\mathcal{T}_h \}
\end{equation}
The SIP (symmetric interior penalty) method is then written as: find $u_h \in V_h$ such that
\begin{equation}
  \label{PbDG} a_h (u_h, v_h) = L_h (v_h), \hspace{1em} \forall v_h \in V_h
\end{equation}
with the bilinear form $a_h$ and the linear form $L_h$ defined by 
\begin{multline}\label{ah} 
 a_h (u, v) = \sum_{T \in \mathcal{T}_h} \int_T A \nabla u \cdot
  \nabla v - \sum_{E \in \mathcal{E}_h} \int_E (\{A \nabla u \cdot n\}[v] +\{A
  \nabla v \cdot n\}[u]) \\
  + \sum_{E \in \mathcal{E}_h} \frac{\gamma}{h_E} \int_E [u] [v]
\end{multline}
and 
\begin{equation}
  \label{Lh} L_h (v) = \sum_{T \in \mathcal{T}_h} \int_T fv + \sum_{E \in
  \mathcal{E}_h^b}  \int_E g \left( \frac{\gamma}{h_E} v - A \nabla v \cdot n
  \right)
\end{equation}
where $\mathcal{E}_h$ is the set of all the edges/faces of the mesh, 
$\mathcal{E}_h^b\subset\mathcal{E}_h$ regroups the edges/faces on the boundary
$\partial\Omega$, $n$, $[\cdot]$ and $\{\cdot\}$ denote the unit normal, the jump and the mean over 
$E\in\mathcal{E}_h$. More precisely, for any internal facet $E$ shared by two mesh cells $T_1^E$ and $T_2^E$, we choose $n$ as the unit vector, normal to $E$ and looking from $T_1^E$ to $T_2^E$. We then define for any function $v$ which is $H^1$ on both $T_1^E$ and $T_2^E$ but discontinuous across $E$
$$
[v]|_E:=v|_{T_1^E} - v|_{T_2^E},
\quad
\{A\nabla v \cdot n\}|_E:=\frac 12 A \left(\nabla v|_{T_1^E}+\nabla v|_{T_2^E}\right) \cdot n
$$
On a boundary edge $E\in\mathcal{E}_h^b$, $n$ is the unit normal looking outward $\Omega$ and $[v]=v$,  $\{A\nabla v \cdot n\}=A\nabla v \cdot n$. \new{The parameter $h_E$ in (\ref{ah})--(\ref{Lh}) is the local length scale of the mesh near the facet $E$ which will be properly defined in Lemma \ref{lemma1}.\footnote{\new{The usual choice $h_E=\diam(E)$ is not appropriate on general meshes since some of the facets can be of much smaller diameter than that of the adjacent cell.}} Finally, $\gamma$ is the interior penalty parameter which should be chosen sufficiently big.}  

Unlike the case of continuous finite elements, Problem (\ref{PbDG}) does not 
allow directly for a static condensation. However, we can construct a modification of (\ref{PbDG}) that mimics the
characterization of local and global components of the solution by the
projectors on local polynomial spaces (\ref{piloc})--(\ref{piglob}). These
spaces are now defined simply as $$V^{{loc}, T}_h =\mathbb{P}^{k -
2}(T)$$ We also let $\pi_{T, k - 2}$ to be the projection to $V^{{loc}, T}_h$, 
orthogonal   in $L^2 (T)$, and propose the following scheme:
\begin{framed}
\begin{itemize}
  \item Compute $u_h^{{loc}} \in V_h$ by solving
  \begin{equation}
    \label{pilocDG} \pi_{T, k - 2} \LL(u^{{loc}}_h |_T) = \pi_{T, k -
    2} f , \hspace{1em} \forall T \in \mathcal{T}_h
  \end{equation}
\new{i.e. find $u^{{loc}}_h |_T \in\mathbb{P}^{k}(T)$ on all mesh cells $T\in\mathcal{T}_h$ such that 
$$ \int_T \LL(u^{{loc}}_h |_T)q_T=\int_T fq_T,\quad\forall q_T\in\mathbb{P}^{k -
2}(T)
$$ }	
  \item Define the subspace of $V_h$
  \begin{equation}
    \label{piglobDG} V_h' = \left\{ v_h' \in V_h : \pi_{T, k - 2}
    \LL(v _h' |_T) = 0 , \hspace{1em} \forall T \in
    \mathcal{T}_h \right\}
  \end{equation}
  \new{i.e. the subspace of functions $v_h' \in V_h$ such that 
	\begin{equation}
    \label{piglobDGprob} \int_T \LL(v'_h |_T)q_T=0, \quad\forall q_T\in\mathbb{P}^{k -2}(T)
	\end{equation}	on all mesh cells $T\in\mathcal{T}_h$. }	
\item Compute $u_h' \in V_h'$ such that
  \begin{equation}
    \label{PbglobDG} a_h (u_h', v_h') = L_h (v'_h) - a_h (u_h^{{loc}},
    v_h'), \hspace{1em} \forall v'_h \in V'_h
  \end{equation}
  \item Set \begin{equation}
    \label{sumDG} 
    u_h = u_h^{{loc}} + u_h'.
    \end{equation}
\end{itemize}
\end{framed}

  \new{We shall show that the local problem (\ref{pilocDG}) admits an infinity of solutions. We can choose 
 any of these solutions on each mesh cell to form $u_h^{loc}$. Nevertheless, the final result $u_h$ given by (\ref{sumDG}) is unique, cf. Lemma \ref{lemma3}. }

\new{Note that the dimension of the ``global'' space $V_h'$ on a given mesh 
is of order $k$ in 2D ($k^2$ in 3D) so that global problem (\ref{PbglobDG}) is 
much cheaper than (\ref{PbDG}) for large $k$. We have thus asymptotically the same costs for the global problems as for CG FEM with static condensation. There is though a fundamental difference between static condensation approaches in CG and DG cases from the implementation point of view: the basis functions for the global space $W_h'$ in the CG case are known \textit{a priori}, whereas those for the space $V_h'$ in the DG case should be constructed as solutions to local problems (\ref{piglobDGprob}), cf. the discussion of the implementation issues in Subsection \ref{secImpl}. Note however that one can get rid of problems (\ref{piglobDGprob}) in the special case of a constant coefficient matrix $A$ in (\ref{Pb}), cf. Remark \ref{RemConstA}. Indeed, (\ref{piglobDGprob}) is reduced in this case to $\LL(v'_h |_T)=A_{{ij}}\partial_i  \partial_j(v'_h |_T)=0$ on any cell $T\in\Th$. The structure of $V'_h$ does not thus vary from one cell to another and a basis for $V'_h$ can be chosen \textit{a priori} on all the cells.}

The local projection step (\ref{pilocDG}) is not necessarily consistent with the
original formulation (\ref{PbDG}) so that the solution $u_h$ given by
(\ref{pilocDG})--(\ref{sumDG}) is different from that of (\ref{PbDG}). We
shall prove however that SIP and scSIP approximations satisfy the same \textit{a priori} error
bounds. Moreover, they turn out to be very close to each other in numerical
experiments. 

\section{Well posedness of SIP and scSIP methods and
\textit{a priori} error estimates}
\label{sec3}

Let us now be more precise about the hypotheses on the mesh. Recall that  
$\Omega \subset \mathbb{R}^d$, $d = 2$ or 3, is a Lipschitz bounded domain
and $\mathcal{T}_h$ is a general (not necessarily polygonal or polyhedral)
mesh on $\Omega$. We mean by this that $\mathcal{T}_h$ is a decomposition of
$\Omega$ into mutually disjoint cells $\bar{\Omega} = \cup_{T \in
\mathcal{T}_h} \bar{T}$ so that each cell $T$ is a Lipschitz subdomain of
$\Omega$ and for every $T_1, T_2 \in \mathcal{T}_h$ we have either $T_1 = T_2$ 
or $T_1
\cap T_2 = \varnothing$ (the cells $T_1 \in \mathcal{T}_h$ are treated here as 
open sets). We also introduce the sets of internal and boundary edges/faces as 
respectively 
\begin{align*}
\Eh^i&=\{E=\bar{T_1}\cap\bar{T_2}\text{ for some }T_1,T_2\in\Th\} \\
\Eh^b&=\{E=\bar{T}\cap\partial\Omega\text{ for some }T\in\Th\}
\end{align*}
and denote by $\Eh:=\Eh^i\cap\Eh^b$ the union of all the edges/faces.

Let $B_T$, for any $T \in \mathcal{T}_h$, denote  the smallest ball containing 
$T$, 
and $B_T^{in}$ denote the largest ball inscribed in $T$. Set $h_T = \diam (T)$ 
and $h = \max_{T\in \mathcal{T}_h} h_T$. From now on, we assume that mesh $\Th$ 
is
\begin{itemize}
  \item \textbf{Shape regular}: there is a mesh-independent parameter $\rho_1 > 
1$ such
  that, for all $T \in \mathcal{T}_h$,
  \begin{equation}\label{shape} 
    R_T \leq \rho_1 r_T 
  \end{equation}
  where $r_T$ is the radius of $B_T^{in}$ and $R_T$ is the radius of $B_T$. This 
also implies $h_T \leq 2\rho_1 r_T$ and $R_T \leq \rho_1 h_T$.  
\end{itemize}
Choose an integer $k \ge 2$ and recall the discontinuous FE space (\ref{spacek}).
We assume that $V_h$ has two following properties (and we shall 
prove in Section \ref{SecMesh} that these properties hold under some 
additional assumptions on the mesh):
\begin{itemize}
 \item \textbf{Optimal interpolation}: there exists an operator 
$I_h:H^{k+1}(\Omega) \to V_h$ such that for any $v \in H^{k+1}(\Omega)$  
 \begin{align}\label{Interp}
  &\hspace{-5mm}
  \left( \sum_{T \in \mathcal{T}_h} \left( |v - I_hv |^2_{H^1 (T)}
  + \frac{1}{h^2_T} \|v - I_hv \|^2_{L^2 (T)} + h_T^2|v - I_hv |^2_{H^2 (T)}
  \right.\right. \\
  &  \left.\left. 
   + h_T \| \nabla v - \nabla I_hv \|^2_{L^2 (\partial T)} + \frac{1}{h_T}
  \|v - I_hv ||^2_{L^2 (\partial T)} \right)
  \right)^{\frac{1}{2}}
  \leq C \left( \sum_{T \in \mathcal{T}_h} h_T^{2k} |v|_{H^{k + 1} (T)}
  \right)^{\frac{1}{2}}   \notag
\end{align}
 \item \textbf{Inverse inequalities}: for any $v_h \in V_h$ and any $T\in\Th$
 \begin{equation}\label{traceinv}
 \|v_h \|_{L^2  (\partial T)} \leq \frac{C}{\sqrt{h_T}} \|v_h\|_{L^2(T)}  
  \qquad
  \| \nabla v_h \|_{L^2  (\partial T)} \leq \frac{C}{\sqrt{h_T}}  \| \nabla v_h 
\|_{L^2 (T)} 
 \end{equation}
 and
 \begin{equation}\label{H2inv}
  | v_h |_{H^2  (T)} \leq \frac{C}{{h_T}}  |v_h |_{H^1 (T)} 
 \end{equation}
\end{itemize}

We can now study the well posedness and establish optimal \textit{a priori} error 
estimates for the classical SIP method (\ref{PbDG}). 

\begin{lemma}\label{lemma1}
Under the above assumptions on the mesh and on $V_h$, setting 
\begin{align}\label{hE}
 h_E &= 2\left(\frac{1}{h_{T_1}} + \frac{1}{h_{T_2}}\right)^{-1}&&\text{ for any 
}E\in\Eh^i\text{ with }E = \partial T_1 \cap \partial T_2, \\
 h_E &= h_T&&\text{ for any }E\in\Eh^b\text{ with }E = \partial T \cap 
\partial\Omega, \notag
\end{align}
and choosing $\gamma$ large enough, $\gamma\ge\gamma_0$, the bilinear form $a_h$ defined by (\ref{ah})
is coercive, i.e.
\begin{equation}\label{ahcoer}
a_h(v_h , v_h ) \ge c \interleave v_h\interleave ^2,\quad \forall v_h\in V_h
\end{equation}
with some $c > 0$ and the triple norm defined by
\begin{equation}\label{triple}
\interleave v\interleave ^2 = \sum_{T\in\Th} 
  \left( |v|^2_{H^1(T)} + \frac{1}{h_T} \|[v]\|^2_{L^2(\partial T)} \right)
\end{equation}
The constants $c,\gamma_0$ depend only on the parameters in the assumptions on 
the mesh and on $V_h$, as well as on $\alpha,\beta$ in (\ref{regel1}).
\end{lemma}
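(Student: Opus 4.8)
The plan is to run the standard coercivity argument for the interior penalty form, the only real point being that the weights $h_E$ have to be taken exactly as in (\ref{hE}) so that the inverse trace inequality (\ref{traceinv}) produces matching negative powers of the local mesh size. Taking $u=v=v_h$ in (\ref{ah}) I would write
\begin{multline*}
a_h(v_h,v_h)=\sum_{T\in\Th}\int_T A\nabla v_h\cdot\nabla v_h\\
-2\sum_{E\in\Eh}\int_E\{A\nabla v_h\cdot n\}[v_h]
+\gamma\sum_{E\in\Eh}\frac{1}{h_E}\|[v_h]\|^2_{L^2(E)},
\end{multline*}
bound the first sum below by $\alpha\sum_{T\in\Th}|v_h|^2_{H^1(T)}$ with the left inequality in (\ref{regel1}), retain the (nonnegative) penalty sum, and absorb the cross term.

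Before estimating the cross term I would record two elementary facts about (\ref{hE}). First, since $\tfrac1{h_E}=\tfrac12(\tfrac1{h_{T_1}}+\tfrac1{h_{T_2}})$ on an interior face $E=\partial T_1\cap\partial T_2$, and $h_E=h_T$ on a boundary face, one has $h_E\le 2h_T$ for every cell $T$ adjacent to $E$. Second, regrouping the face sum cell by cell,
\[
\sum_{T\in\Th}\frac1{h_T}\|[v_h]\|^2_{L^2(\partial T)}
=2\sum_{E\in\Eh^i}\frac1{h_E}\|[v_h]\|^2_{L^2(E)}+\sum_{E\in\Eh^b}\frac1{h_E}\|[v_h]\|^2_{L^2(E)},
\]
so that $\sum_{E\in\Eh}\tfrac1{h_E}\|[v_h]\|^2_{L^2(E)}$ is equivalent, with constants $\tfrac12$ and $1$, to the jump part of the triple norm (\ref{triple}).

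For the cross term I would estimate, on each face, $\big|\int_E\{A\nabla v_h\cdot n\}[v_h]\big|$ by Cauchy--Schwarz, bound $|A\nabla v_h\cdot n|\le\beta|\nabla v_h|$ (a consequence of (\ref{regel1}), $A$ being symmetric), and apply the inverse trace inequality (\ref{traceinv}) to $\nabla v_h$ on each adjacent cell ($E\subset\partial T$ gives $\|\nabla v_h|_T\|_{L^2(E)}\le\|\nabla v_h\|_{L^2(\partial T)}\le C h_T^{-1/2}\|\nabla v_h\|_{L^2(T)}$). Writing each integrand as $(\sqrt{h_E}\,\{A\nabla v_h\cdot n\})(h_E^{-1/2}[v_h])$ and using Young's inequality with a parameter $\varepsilon>0$,
\begin{multline*}
2\Big|\sum_{E\in\Eh}\int_E\{A\nabla v_h\cdot n\}[v_h]\Big|\\
\le\varepsilon\sum_{E\in\Eh}h_E\|\{A\nabla v_h\cdot n\}\|^2_{L^2(E)}
+\frac1\varepsilon\sum_{E\in\Eh}\frac1{h_E}\|[v_h]\|^2_{L^2(E)}.
\end{multline*}
The second sum is controlled by the triple norm via the previous step. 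For the first, I would convert the sum over faces into a sum over cells: each pair $(T,E)$ with $E\subset\partial T$ contributes a fixed multiple of $\beta^2 h_E\|\nabla v_h\|^2_{L^2(E)}\le 2\beta^2 h_T\|\nabla v_h\|^2_{L^2(E)}$ (by $h_E\le 2h_T$), and since the faces of $T$ partition $\partial T$ up to a null set, summing over them gives a fixed multiple of $\beta^2 h_T\|\nabla v_h\|^2_{L^2(\partial T)}\le C^2\beta^2\|\nabla v_h\|^2_{L^2(T)}$ by (\ref{traceinv}); note that no bound on the number of faces of a cell is needed. Hence the first sum is at most $C_\star\sum_{T\in\Th}|v_h|^2_{H^1(T)}$, with $C_\star$ depending only on $\beta$ and the constant in (\ref{traceinv}).

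Collecting everything,
\[
a_h(v_h,v_h)\ge(\alpha-\varepsilon C_\star)\sum_{T\in\Th}|v_h|^2_{H^1(T)}
+\Big(\gamma-\frac1\varepsilon\Big)\sum_{E\in\Eh}\frac1{h_E}\|[v_h]\|^2_{L^2(E)},
\]
so taking $\varepsilon=\alpha/(2C_\star)$ and then $\gamma_0=1+1/\varepsilon$ makes both coefficients bounded below by positive constants for every $\gamma\ge\gamma_0$; the equivalence with the triple norm from the second step then yields (\ref{ahcoer}), with $c$ and $\gamma_0$ depending only on $\alpha,\beta$ and the mesh/FE constants. I do not expect a genuine obstacle --- the computation is textbook --- the single place where the generality of the meshes has to be respected is the choice of $h_E$: it must scale like the harmonic mean of the adjacent cell diameters rather than like $\diam E$ (which can be arbitrarily small relative to the cells), so that $h_E^{-1}$ matches the $h_T^{-1}$ coming out of the inverse trace inequality; and the face-to-cell bookkeeping should be carried out through the partition of $\partial T$ to avoid any assumption on the number of faces per cell.
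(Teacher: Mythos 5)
Your proof is correct: the paper itself omits the argument as ``well known,'' and what you give is exactly the standard coercivity computation it has in mind, with the one genuinely mesh-sensitive point handled properly --- namely that $1/h_E$ is the arithmetic mean of $1/h_{T_1}$ and $1/h_{T_2}$, so the face sums regroup cell by cell into the $h_T^{-1}\|[v_h]\|^2_{L^2(\partial T)}$ terms of the triple norm and match the inverse trace inequality (\ref{traceinv}) without any reference to $\diam E$ or to the number of faces per cell. (Your parenthetical that $|A\nabla v_h\cdot n|\le\beta|\nabla v_h|$ uses the symmetry of $A$ is the right caveat, since (\ref{regel1}) alone only controls the symmetric part.)
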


\new{We skip the proof of this well known result. We stress however that our definitions of the length scale $h_E$ and of the triple norm (\ref{triple}) may be slightly different from those available in the literature. In particular, we avoid to use the diameter of a facet $E$ (or any other geometrical information on $E$) to define $h_E$. This choice enables us to establish straightforwardly the coercivity of $a_h$ with respect to the triple norm, which does not see the separate mesh facets either (only the whole boundaries of the mesh cells are present there). More elaborate choices for the interior penalty parameters are proposed in \cite{cangiani14}. }

Lemma \ref{lemma1} implies that problem (\ref{PbDG}) of the SIP method is well posed. Moreover, we 
have the following error estimate, the proof of which is also skipped (actually, it goes along the same 
lines as that of our forthcoming Theorem \ref{MainTh}).

\begin{theorem}
Assume that the solution u to (\ref{Pb}) is in $H^{k+1}(\Omega)$. Under the 
assumptions of Lemma \ref{lemma1}, there exists the unique solution $u_h$ to 
(\ref{PbDG}) and it satisfies
$$
|u - u_h |_{H^1(\Th)} \le Ch^k |u|_{H^{k+1}(\Omega)} 
$$
where $H^1(\Th)$ is the broken $H^1$ space on the mesh $\Th$ and 
$|\cdot|_{H^1(\Th)}:=\left(\sum_{T\in\Th} 
|\cdot|^2_{H^1(T)}\right)^{\frac{1}{2}}$.  
  If, moreover, the elliptic regularity property holds for (\ref{Pb}), then
  \[ \|u - u_h \|_{L^2 (\Omega)} \leq C |u|_{H^{k + 1} (\Omega)} h^{k +
     1} \]
\end{theorem}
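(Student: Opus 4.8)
The plan is to combine a C\'ea-type argument in the triple norm $\interleave\cdot\interleave$ with an Aubin--Nitsche duality, exactly as for the forthcoming Theorem~\ref{MainTh}. \emph{Well-posedness} is immediate from Lemma~\ref{lemma1}: coercivity of $a_h$ on $V_h$ (for $\gamma\ge\gamma_0$) together with boundedness of $a_h$ on $V_h\times V_h$ with respect to $\interleave\cdot\interleave$ — obtained from Cauchy--Schwarz, (\ref{regel1}), the inverse trace inequality (\ref{traceinv}) applied to $\{A\nabla v_h\cdot n\}$, and the relation $\frac1{h_{T_1}}+\frac1{h_{T_2}}=\frac2{h_E}$ of (\ref{hE}) which passes between the cell quantities $\frac1{h_T}\|[v_h]\|^2_{L^2(\partial T)}$ and the facet quantities $\frac1{h_E}\|[v_h]\|^2_{L^2(E)}$ — yields, by Lax--Milgram, a unique $u_h\in V_h$. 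Moreover the scheme is \emph{consistent}: since $u\in H^{k+1}(\Omega)\subset H^2(\Omega)$ (as $k\ge2$), $[u]=0$ and $A\nabla u\cdot n$ is single-valued on internal facets while $[u]=g$ on $\partial\Omega$; integrating $\LL u=f$ cell by cell against $v_h\in V_h$ and reassembling gives $a_h(u,v_h)=L_h(v_h)$, hence the Galerkin orthogonality $a_h(u-u_h,v_h)=0$ for all $v_h\in V_h$.

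For the \emph{$H^1$ estimate}, split $u-u_h=\eta+\xi$ with $\eta:=u-I_hu$ and $\xi:=I_hu-u_h\in V_h$. Coercivity and Galerkin orthogonality give $c\interleave\xi\interleave^2\le a_h(\xi,\xi)=-a_h(\eta,\xi)$, so everything rests on the boundedness inequality $|a_h(\eta,\xi)|\le C\interleave\eta\interleave_\ast\interleave\xi\interleave$, where $\interleave\cdot\interleave_\ast$ is the mesh-dependent norm appearing on the left-hand side of (\ref{Interp}),
$$\interleave w\interleave_\ast^2:=\sum_{T\in\Th}\Bigl(|w|^2_{H^1(T)}+\tfrac1{h_T^2}\|w\|^2_{L^2(T)}+h_T^2|w|^2_{H^2(T)}+h_T\|\nabla w\|^2_{L^2(\partial T)}+\tfrac1{h_T}\|w\|^2_{L^2(\partial T)}\Bigr).$$
This is checked term by term in (\ref{ah}): the volume term by (\ref{regel1}); the consistency term $\sum_E\int_E\{A\nabla\eta\cdot n\}[\xi]$ by pairing the facet quantity $h_T^{1/2}\|\nabla\eta\|_{L^2(\partial T)}$ of $\interleave\eta\interleave_\ast$ with the jump part of $\interleave\xi\interleave$ (again via (\ref{hE})); the symmetric term $\sum_E\int_E\{A\nabla\xi\cdot n\}[\eta]$ by converting $\|\nabla\xi\|_{L^2(\partial T)}$ with (\ref{traceinv}) into $h_T^{-1/2}\|\nabla\xi\|_{L^2(T)}\le h_T^{-1/2}\interleave\xi\interleave$ and pairing with $h_T^{-1/2}\|\eta\|_{L^2(\partial T)}$; the penalty term by Cauchy--Schwarz and (\ref{hE}). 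Hence $\interleave\xi\interleave\le C\interleave\eta\interleave_\ast$, and (\ref{Interp}) gives $\interleave\eta\interleave_\ast\le C(\sum_{T\in\Th}h_T^{2k}|u|^2_{H^{k+1}(T)})^{1/2}\le Ch^k|u|_{H^{k+1}(\Omega)}$; by the triangle inequality $|u-u_h|_{H^1(\Th)}\le\interleave u-u_h\interleave\le Ch^k|u|_{H^{k+1}(\Omega)}$, which also records the bound on $\interleave u-u_h\interleave$ used below.

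For the \emph{$L^2$ estimate}, assume elliptic regularity, set $e:=u-u_h$, and let $z\in H^1_0(\Omega)\cap H^2(\Omega)$ solve the adjoint problem $\LL^\ast z=e$ in $\Omega$, $z=0$ on $\partial\Omega$, so that $\|z\|_{H^2(\Omega)}\le C\|e\|_{L^2(\Omega)}$. Integrating by parts cell by cell as in the consistency step (using $[z]=0$, the single-valuedness of $A\nabla z\cdot n$, and the symmetry of $a_h$) gives $a_h(v,z)=(v,\LL^\ast z)_{L^2(\Omega)}$ for every broken $H^1$ function $v$; with $v=e$ this reads $\|e\|^2_{L^2(\Omega)}=a_h(e,z)$. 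Since $a_h(e,I_hz)=0$ by Galerkin orthogonality, the boundedness inequality above applied to $(e,z-I_hz)$ gives
$$\|e\|^2_{L^2(\Omega)}=a_h(e,z-I_hz)\le C\interleave e\interleave\,\interleave z-I_hz\interleave_\ast\le C\,h\,\interleave e\interleave\,|z|_{H^2(\Omega)}\le C\,h\,\interleave e\interleave\,\|e\|_{L^2(\Omega)},$$
where we used (\ref{Interp}) for $z$ in its first-order (``$k=1$'') form. Dividing by $\|e\|_{L^2(\Omega)}$ and inserting $\interleave e\interleave\le Ch^k|u|_{H^{k+1}(\Omega)}$ yields $\|e\|_{L^2(\Omega)}\le Ch^{k+1}|u|_{H^{k+1}(\Omega)}$.

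The one genuinely delicate point — everything else being bookkeeping — is the boundedness of $a_h$ on the augmented space $(H^{k+1}(\Omega)+V_h)\times V_h$, i.e. controlling the normal-flux facet terms $\{A\nabla(\cdot)\cdot n\}$ for an argument that is only piecewise $H^2$ plus a discrete part. This is exactly what the facet contributions $h_T\|\nabla(v-I_hv)\|^2_{L^2(\partial T)}$ and $h_T^{-1}\|v-I_hv\|^2_{L^2(\partial T)}$ of the interpolation estimate (\ref{Interp}), together with the inverse inequalities (\ref{traceinv})--(\ref{H2inv}) and the harmonic-mean scaling (\ref{hE}), are designed to handle; the duality step in addition relies on (\ref{Interp}) holding in its first-order form, which is built into the construction of $I_h$ in Section~\ref{SecMesh}.
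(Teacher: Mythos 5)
Your proof is correct and follows essentially the route the paper itself indicates for this theorem (whose proof it explicitly skips, referring to that of Theorem~\ref{MainTh}): coercivity plus consistency give quasi-optimality in the triple norm, then Aubin--Nitsche duality, with the facet flux terms absorbed into an augmented norm via (\ref{Interp}), (\ref{traceinv}) and the harmonic-mean definition (\ref{hE}). The only imprecision is cosmetic: in the duality step the factor $\interleave e\interleave$ should be the augmented norm $\interleave e\interleave_a$ (its flux part controlled by the $h_T\|\nabla(u-I_hu)\|^2_{L^2(\partial T)}$ term of (\ref{Interp}) for the continuous component and by the inverse trace inequality for the discrete one), exactly as in the displayed estimate in the paper's proof of Theorem~\ref{MainTh}; this does not affect the final bounds.
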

 
We turn now to the study of the scSIP method 
(\ref{pilocDG})--(\ref{PbglobDG}) and start by the
following technical lemma.
\begin{lemma}\label{lemma2}
  There exists $h_0 > 0$ such that for all $T \in \mathcal{T}_h$ with $h_T
  \leq h_0$ and for all $q_T \in \mathbb{P}_{k - 2} (T)$ one can find
  $u_T \in \mathbb{P}_k (T)$ such that
  \begin{equation}
    \label{uT1} \int_T q_T ({\LL u}_T) \geq \frac{1}{2} \|q_T \|_{L^2
    (T)}^2
  \end{equation}
  and
  \begin{equation}
    \label{uT2} |u_T |_{H^1 (T)}^2 + \frac{1}{h_T} \|u_T \|_{L^2  (\partial
    T)}^2 \leq {Ch}_T^2 \|q_T \|_{L^2 (T)}^2
  \end{equation}
  The constants $h_0$ and $C$ depend only on the regularity of the mesh and
  on $\alpha$, $\beta$ and $M$ in (\ref{regel1}) and (\ref{regel2}). \new{One can put $h_0=+\infty$ if the coefficient matrix $A$ is constant on $T$.}
\end{lemma}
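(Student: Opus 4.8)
The plan is to build $u_T$ by freezing the diffusion matrix at a point of $T$, solving the resulting constant-coefficient equation \emph{exactly} on a reference ball, scaling the result back to $T$, and then absorbing the perturbation caused by the variation of $A$.

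First I would settle the algebraic ingredient: for any constant symmetric positive definite matrix $\bar A$ whose eigenvalues lie in $[\alpha,\beta]$, the map $p\mapsto -\bar A_{ij}\partial_i\partial_j p$ is a surjection from $\mathbb{P}_k$ onto $\mathbb{P}_{k-2}$. Indeed, a linear invertible change of variables reduces the question to $\bar A=I$, i.e. to $-\Delta:\mathbb{P}_k\to\mathbb{P}_{k-2}$, which is onto because its kernel — the harmonic polynomials of degree $\le k$ — has dimension exactly $\dim\mathbb{P}_k-\dim\mathbb{P}_{k-2}$ (the classical splitting of homogeneous polynomials into harmonic ones plus $|x|^2$ times polynomials of lower degree). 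Choosing, on the unit ball $B=B(0,1)$, the preimage of minimal $L^2(B)$ norm defines a linear right inverse $S_{\bar A}:\mathbb{P}_{k-2}\to\mathbb{P}_k$ with $-\bar A_{ij}\partial_i\partial_j(S_{\bar A}q)=q$; it depends continuously on $\bar A$, so a compactness argument over the admissible $\bar A$ combined with the equivalence of all norms on the finite-dimensional spaces $\mathbb{P}_k,\mathbb{P}_{k-2}$ yields $\|S_{\bar A}q\|_{H^2(B)}\le C\|q\|_{L^2(B)}$ with $C=C(k,d,\alpha,\beta)$. Taking the \emph{minimal-norm} preimage is essential here, since a generic solution can be arbitrarily large because of the nontrivial kernel.

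Next I would transport this to a general cell by scaling. Fix $x_T\in T$ (say the centre of $B_T^{in}$), set $\bar A:=A(x_T)$ and $\lambda_T:=2R_T$, so that $B(x_T,\lambda_T)\supset T$ and, by shape regularity (\ref{shape}), $h_T\le\lambda_T\le 2\rho_1 h_T$. Given $q_T\in\mathbb{P}_{k-2}(T)$ put $\hat q(\hat x):=q_T(x_T+\lambda_T\hat x)$, $\hat u:=S_{\bar A}\hat q$ and $u_T(x):=\lambda_T^2\,\hat u\bigl((x-x_T)/\lambda_T\bigr)$; then $u_T\in\mathbb{P}_k(T)$ and $-\bar A_{ij}\partial_i\partial_j u_T=q_T$ identically. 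A routine change of variables in the estimate $\|\hat u\|_{H^2(B)}\le C\|\hat q\|_{L^2(B)}$, together with the equivalence — with constants depending only on $k,d,\rho_1$, since these are comparable balls — of the $L^2$ norm of a polynomial of degree $\le k$ over $B_T^{in}$, over $T$ and over $B(x_T,\lambda_T)$, yields
\[
\|u_T\|_{L^2(T)}\le C h_T^2\|q_T\|_{L^2(T)},\qquad |u_T|_{H^1(T)}\le C h_T\|q_T\|_{L^2(T)},\qquad |u_T|_{H^2(T)}\le C\|q_T\|_{L^2(T)} .
\]
The trace inverse inequality (\ref{traceinv}), applied to $u_T\in\mathbb{P}_k(T)=V_h|_T$, then gives $\|u_T\|_{L^2(\partial T)}\le C h_T^{-1/2}\|u_T\|_{L^2(T)}$, and together with the first two estimates this is exactly (\ref{uT2}).

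Finally, for (\ref{uT1}) I would use $-\bar A_{ij}\partial_i\partial_j u_T=q_T$ to expand
\[
\LL u_T = q_T - \bigl(A_{ij}(x)-\bar A_{ij}\bigr)\partial_i\partial_j u_T - (\partial_i A_{ij})\,\partial_j u_T ,
\]
so that $\int_T q_T(\LL u_T)=\|q_T\|_{L^2(T)}^2-E_1-E_2$ with $E_1,E_2$ the integrals of $q_T$ against the last two terms. On $T$ one has $|A_{ij}(x)-\bar A_{ij}|\le M|x-x_T|\le M h_T$ and $|\partial_i A_{ij}|\le M$ by (\ref{regel2}), hence Cauchy--Schwarz together with the $H^1$- and $H^2$-bounds on $u_T$ gives $|E_1|+|E_2|\le C M h_T\|q_T\|_{L^2(T)}^2$, whence $\int_T q_T(\LL u_T)\ge(1-CMh_T)\|q_T\|_{L^2(T)}^2\ge\frac{1}{2}\|q_T\|_{L^2(T)}^2$ as soon as $h_T\le h_0:=1/(2CM)$. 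If $A$ is constant on $T$ then $\bar A=A$, the terms $E_1,E_2$ vanish, and the inequality holds for every $h_T$, i.e. $h_0=+\infty$. The only genuinely delicate point throughout is to keep every constant dependent solely on $k,d$, the shape-regularity parameter $\rho_1$ and the constants in the assumptions on $V_h$ (and, for $h_0$, also on $\alpha,\beta,M$): this forces the construction to be anchored to the inscribed and circumscribed balls and to the \emph{uniform} right inverse $S_{\bar A}$, since no reference-element map is available for cells of general shape.
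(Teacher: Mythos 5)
Your proof is correct, and its overall architecture --- freeze $A$ at a point of $T$, solve the constant-coefficient equation exactly within $\mathbb{P}_k(T)$ with quantitative control, rescale, then absorb the variable-coefficient remainder using (\ref{regel2}) for $h_T$ small --- is exactly that of the paper; the scaling estimates, the use of the trace inverse inequality for the $L^2(\partial T)$ term, and the final perturbation bound $\int_T q_T\LL u_T\ge(1-CMh_T)\|q_T\|^2_{L^2(T)}$ all match. The one genuinely different ingredient is how you produce a uniformly bounded right inverse of the frozen operator $\LL^0=-\bar A_{ij}\partial_i\partial_j$ on polynomials. The paper uses a bubble-function ansatz: it seeks $u_T=\chi_T v_T$ with $\chi_T$ the quadratic vanishing on $\partial B_T^{in}$ and shows that $v\mapsto\LL^0(\chi_T v)$ is a bijection of $\mathbb{P}_{k-2}(T)$ onto itself (injectivity via uniqueness for the Dirichlet problem on $B_T^{in}$, then rank--nullity), which hands over a concrete, explicitly scalable preimage. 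You instead prove surjectivity of $\LL^0:\mathbb{P}_k\to\mathbb{P}_{k-2}$ directly through the harmonic (Fischer) decomposition and select the minimal-norm preimage, invoking continuity of the pseudo-inverse on the constant-rank family and compactness of $\{\alpha I\le\bar A\le\beta I\}$ for uniformity. Both are sound; the paper's bubble construction buys an explicit formula whose boundedness follows from a single scaling to the unit ball (it too ultimately rests on a compactness/continuity argument in $\bar A$ hidden in ``by scaling''), while yours avoids the auxiliary map $Q$ at the price of justifying the continuity of the minimal-norm selection --- a point you correctly flag and correctly resolve by noting the rank is constant. Your bounds are stated in $H^2/H^1/L^2$ rather than the paper's $W^{2,\infty}/W^{1,\infty}/L^\infty$ scale, which changes nothing since everything lives in a fixed finite-dimensional space over comparable balls.
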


\begin{proof}
  Let $\chi_T$ be the polynomial of degree 2 vanishing on $\partial
  B_T^{{in}}$, i.e. $$\chi_T (x) = \left( \sum_{i = 1}^d (x_i - x^0_i)^2 -
  r_T^2 \right)$$ where $x^0 = (x^0_1, \ldots, x^0_d)$ is the center of
  $B_T^{{in}}$ and $r_T$ is its radius. Set $A^0_{{ij}} =
  A_{{ij}} (x^0)$ and $\LL^0$=$- \partial_i A_{{ij}}^0 \partial_j$.
  Consider the linear map
  \[ Q : \mathbb{P}_{k - 2} (T) \rightarrow \mathbb{P}_{k - 2} (T) \]
  defined by
  \[ Q (v) = \LL^0  (\chi_T v) \]
  The kernel of $Q$ is $\{0\}$. Indeed, if $Q (v) = 0$ then $w:= \chi_T v$ is 
the solution to
\[ \LL^0 w = 0 \text{ in } B_T^{{in}}, \hspace{1em} w = 0 \text{ on } \partial
   B_T^{{in}} \]
so that $w = 0$ as a solution to an elliptic problem with vanishing right-hand 
side and boundary conditions. 
Since $Q$ is a linear map on the finite dimensional
space $\mathbb{P}_{k - 2} (T)$, this means that $Q$ is
one-to-one.
  
  Take any $q_T \in \mathbb{P}_{k - 2} (T)$ and let $u_T = \chi_T v_T$ with
  $v_T \in \mathbb{P}_{k - 2} (T)$ such that $Q (v_T) = q_T$. We have thus
  constructed $u_T \in \mathbb{P}_k (T)$ such that $\LL^0 u_T = q_T$. This
  immediately proves (\ref{uT1}) in the case of an operator $\LL = \LL^0$ with
  constant coefficients. Moreover, by scaling,
  \begin{equation}
    \label{uT3} |u_T |_{W^{2, \infty} (B_T)} + \frac{1}{h_T} |u_T |_{W^{1,
    \infty} (B_T)} + \frac{1}{h_T^2} \|u_T \|_{L^{\infty} (B_T)} \leq
    \frac{C}{h_T^{d / 2}} \|q_T \|_{L^2 (B_T^{{in}})}
  \end{equation}
  with a constant $C$ depending only on $\alpha$, $\beta$ and the ratio $R_T /
  r_T$. Thus,
  \[ |u_T |_{H^1 (T)} \leq | T |^{1 / 2} | u_T |_{W^{1, \infty} (B_T)}
     \leq {Ch}_T \|q_T \|_{L^2 (T)} \]
  which proves the estimate in $H^1 (T)$ norm in (\ref{uT2}). Similarly, $\|
  u_T \|_{L^2 (T)} \leq {Ch}_T^2 \|q_T \|_{L^2
  (T)}$ and the estimate in $L^2 (\partial T)$ norm in
  (\ref{uT2}) follows by the trace inverse inequality.
  
  It remains to prove (\ref{uT1}) in the case of operator $\LL$ with variable
  coefficients. To this end, we use the estimates in (\ref{uT3}) as follows
  
  \begin{align*}
    \int_T q_T {\LL u}_T & = \int_T q_T \LL^0 u_T + \int_T q_T \partial_i 
    ((A_{{ij}} - A_{{ij}}^0) \partial_j u_T)\\
    & \geq \|q_T \|_{L^2 (T)}^2 - \|q_T \|_{L^2 (T)} |T|^{1
    / 2} [\max_{x \in T} |A (x) - A_0 ||u_T |_{W^{2, \infty} (T)} + \max_{x
    \in T} | \nabla A (x) ||u_T |_{W^{1, \infty} (T)}]\\
    & \geq \|q_T \|_{L^2 (T)}^2 - \|q_T \|_{L^2 (T)} |T|^{1 / 2} h_T
    \max_{x \in T} | \nabla A (x) | \frac{C}{h_T^{d / 2}} \|q_T \|_{L^2
    (B_T^{{in}})}\\
    & \geq \|q_T \|_{L^2 (T)}^2 - {Ch}_T \|q_T \|^2_{L^2 (T)}
    \geq \frac{1}{2} \|q_T \|_{L^2 (T)}
  \end{align*}  
  for sufficiently small $h_T$. 
\qed\end{proof}

\begin{corollary}\label{corol2}
Introduce the bilinear form 
\[ b_h (q, v) = \sum_{T \in \mathcal{T}_h} h_T^2  \int_T {q \LL u} \]
and the space
\[ M_h = \{v \in L^2 (\Omega) : v|_T \in \mathbb{P}_{k - 2} (T),
   \forall T \in \mathcal{T}_h \} \]
Equip the space $V_h$ with the triple norm (\ref{triple})  and the 
space $M_h$ with
  \[ \|q\|_h = \left( \sum_{T \in \mathcal{T}_h} h_T^2 \|q\|_{L^2 (T)}^2
     \right)^{1 / 2} \]
  The bilinear form $b_h$ satisfies the inf-sup condition
  \begin{equation}\label{infsupb}
    \inf_{q_h \in M_h} \sup_{v_h \in V_h}  \frac{b_h (q_h, v_h)}{\|q_h \|_h
     \interleave v_h \interleave} \geq \delta
  \end{equation}
 with a mesh-independent constant $\delta>0$. Moreover, $b_h$ is continuous on $M_h\times V_h$ with a mesh-independent continuity bound. 
\end{corollary}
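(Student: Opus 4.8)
The workhorse is Lemma~\ref{lemma2}, which on each cell produces, for a prescribed $q_T\in\mathbb{P}_{k-2}(T)$, a polynomial $u_T\in\mathbb{P}_k(T)$ against which $\LL u_T$ pairs well with $\mathbb{P}_{k-2}(T)$ while $u_T$ stays small in $H^1(T)$ and on $\partial T$. The plan is (i) to obtain continuity of $b_h$ directly from the $H^2$ inverse inequality (\ref{H2inv}), and (ii) to prove the inf-sup bound (\ref{infsupb}) by exhibiting, for every $q_h\in M_h$, the piecewise test function $v_h\in V_h$ given by $v_h|_T:=u_T$. For continuity one writes, on each $T$, $\LL v=-A_{ij}\partial_i\partial_j v-(\partial_i A_{ij})\partial_j v$, so (\ref{regel1})--(\ref{regel2}) and then (\ref{H2inv}) give $\|\LL v\|_{L^2(T)}\le C(\beta|v|_{H^2(T)}+M|v|_{H^1(T)})\le \frac{C(\beta+Mh_T)}{h_T}|v|_{H^1(T)}$; hence $h_T^2|\int_T q\,\LL v|\le C(\beta+Mh)\,h_T\|q\|_{L^2(T)}|v|_{H^1(T)}$, and a Cauchy--Schwarz summation over $T\in\Th$ (with $h\le\diam\Omega$) yields $|b_h(q,v)|\le C\|q\|_h\interleave v\interleave$ with $C$ depending only on $\beta$, $M$ and $\diam\Omega$.

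For the inf-sup bound, fix $q_h\in M_h$ and put $q_T:=q_h|_T$. Assuming $h\le h_0$ (which is no restriction when $A$ is piecewise constant, and otherwise no loss, only small $h$ being relevant to the error analysis), Lemma~\ref{lemma2} applies on every cell and we set $v_h|_T:=u_T$. Summing (\ref{uT1}) over the cells,
\[
 b_h(q_h,v_h)=\sum_{T\in\Th}h_T^2\int_T q_T\,(\LL u_T)\ \ge\ \frac12\sum_{T\in\Th}h_T^2\|q_T\|_{L^2(T)}^2=\frac12\|q_h\|_h^2 ,
\]
so it remains to bound $\interleave v_h\interleave$ from above by a constant times $\|q_h\|_h$. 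The $H^1$ part of the triple norm is immediate from (\ref{uT2}): $\sum_{T\in\Th}|v_h|_{H^1(T)}^2\le C\sum_{T\in\Th}h_T^2\|q_T\|_{L^2(T)}^2=C\|q_h\|_h^2$.

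The jump part is the crux. Using that $[v_h]$ has equal $L^2$-norm seen from either side of an interior facet, and $[v_h]=v_h$ on boundary facets,
\[
 \sum_{T\in\Th}\frac1{h_T}\|[v_h]\|_{L^2(\partial T)}^2=\sum_{E\in\Eh^i}\Bigl(\frac1{h_{T_1^E}}+\frac1{h_{T_2^E}}\Bigr)\|[v_h]\|_{L^2(E)}^2+\sum_{E\in\Eh^b}\frac1{h_{T^E}}\|v_h\|_{L^2(E)}^2 .
\]
The boundary sum is at most $\sum_{T\in\Th}\frac1{h_T}\|u_T\|_{L^2(\partial T)}^2\le C\|q_h\|_h^2$ by (\ref{uT2}). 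For an interior facet $E=\bar T_1\cap\bar T_2$ with, say, $h_{T_1}\le h_{T_2}$, write $\bigl(\frac1{h_{T_1}}+\frac1{h_{T_2}}\bigr)\|[v_h]\|_{L^2(E)}^2\le\frac4{h_{T_1}}\bigl(\|u_{T_1}\|_{L^2(E)}^2+\|u_{T_2}\|_{L^2(E)}^2\bigr)$; the term carrying $u_{T_1}$, summed over the facets of $T_1$ (which partition $\partial T_1$), is absorbed by $\frac1{h_{T_1}}\|u_{T_1}\|_{L^2(\partial T_1)}^2\le Ch_{T_1}^2\|q_{T_1}\|_{L^2(T_1)}^2$. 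The delicate term is the cross term $\frac1{h_{T_1}}\|u_{T_2}\|_{L^2(E)}^2$ when $h_{T_1}\ll h_{T_2}$: the crude trace bound (\ref{uT2}) would only produce the useless factor $h_{T_2}^3/h_{T_1}$, so one must instead exploit the scale-explicit pointwise estimate $\|u_{T_2}\|_{L^\infty(T_2)}^2\le Ch_{T_2}^{4-d}\|q_{T_2}\|_{L^2(T_2)}^2$ contained in (\ref{uT3}), together with the mesh-regularity fact $|E|\le Ch_{T_1}^{d-1}$ (a shared facet being no larger than the smaller of the two cells it separates):
\[
 \frac1{h_{T_1}}\|u_{T_2}\|_{L^2(E)}^2\le\frac{|E|}{h_{T_1}}\|u_{T_2}\|_{L^\infty(T_2)}^2\le Ch_{T_1}^{d-2}h_{T_2}^{4-d}\|q_{T_2}\|_{L^2(T_2)}^2\le Ch_{T_2}^2\|q_{T_2}\|_{L^2(T_2)}^2
\]
(using $h_{T_1}\le h_{T_2}$ in the last step). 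Summing over the facets of each cell, whose number is controlled by the mesh regularity, yields $\sum_{T\in\Th}\frac1{h_T}\|[v_h]\|_{L^2(\partial T)}^2\le C\|q_h\|_h^2$, hence $\interleave v_h\interleave\le C\|q_h\|_h$. Together with the lower bound on $b_h(q_h,v_h)$ this gives $\sup_{v_h\in V_h}b_h(q_h,v_h)/\bigl(\|q_h\|_h\interleave v_h\interleave\bigr)\ge\frac1{2C}=:\delta>0$, with $\delta$ depending only on the mesh-regularity parameters and on $\alpha,\beta,M$, which is (\ref{infsupb}). The single genuinely delicate point is exactly this cross term: because adjacent cells may differ arbitrarily in size, the boundary estimate (\ref{uT2}) for $u_T$ on $\partial T$ is not enough, and the finer, scale-explicit bound (\ref{uT3}) from the proof of Lemma~\ref{lemma2} must be used.
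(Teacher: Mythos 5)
Your proof is correct and follows essentially the same route as the paper's: the cell-wise test function $v_h|_T=u_T$ supplied by Lemma~\ref{lemma2}, the lower bound $b_h(q_h,v_h)\ge\frac12\|q_h\|_h^2$ from (\ref{uT1}), the bound $\interleave v_h\interleave\le C\|q_h\|_h$ from (\ref{uT2}), and continuity via the inverse inequality (\ref{H2inv}). The only point where you go beyond the paper is the careful handling of the jump cross-term $\frac1{h_{T_1}}\|u_{T_2}\|_{L^2(E)}^2$ between cells of disparate size via the scale-explicit bound (\ref{uT3}); the paper simply invokes (\ref{uT2}) there, which is enough because neighbouring cells have comparable diameters under the local quasi-uniformity assumption \textbf{M2} used to instantiate the abstract hypotheses of Section~\ref{sec3} (and note that even your finer estimate still tacitly needs a bounded number of facets per cell, which again comes from that assumption).
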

\begin{proof}
  Take any $q_h \in M_h$, denote $q_T = q_h |_T$, construct
  $u_T$ as in Lemma \ref{lemma2} and introduce $u_h \in V_h$ by $u_h |_T =u_T$ on
  all $T \in \mathcal{T}_h$. This yields using (\ref{uT1}) and (\ref{uT2}),
  \[ \frac{b_h (q_h, u_h)}{\interleave u_h \interleave} \geq
     \frac{\sum_{T \in \mathcal{T}_h} \frac{h^2_T}{2} \|q_h \|^2_{L^2
     (T)}}{\left( \sum_{T \in \mathcal{T}_h} {Ch}^2_T \|q_h \|^2_{L^2
     (T)} \right)^{1 / 2}} = \frac{2}{\sqrt{C}} \|q_h \|_h \]
  which is equivalent to (\ref{infsupb}) with $\delta = 2 / \sqrt{C}$.
  Finally, the continuity of $b_h$ is easily seen from the inverse inequality (\ref{H2inv}). 
\qed\end{proof}

Lemma \ref{lemma2} implies that operator $\pi_{T,k-2}\mathcal{L}$ appearing in (\ref{pilocDG}) is 
surjective from $\mathbb{P}^k(T)$ to $\mathbb{P}^{k-2}(T)$
 so that (\ref{pilocDG}) has indeed a solution at least on sufficiently refined meshes. The 
existence of a solution to
(\ref{PbglobDG}) follows from the coercivity of $a_h$. Thus, scheme (\ref{pilocDG})--(\ref{sumDG}) produces some
$u_h\in V_h$. In order to establish the error estimates for this $u_h$, we reinterpret its 
definition as a saddle point problem.

\begin{lemma}\label{lemma3}
The problem of finding $u_h\in V_h$ and $p_h \in M_h$ such that
\begin{align}
 a_h (u_h, v_h) + b_h (p_h, v_h) &= L_h(v_h), &&
   \forall v_h \in V_h 
   \label{21}\\
 b_h (q_h, u_h) &= \sum_{T \in \mathcal{T}_h} h_T^2  \int_{T}fq_h, &&
   \forall q_h \in M_h 
   \label{22}
\end{align}
has a unique solution. Moreover, $u_h$ given by (\ref{21})--(\ref{22}) coincides with $u_h$ given by (\ref{pilocDG})--(\ref{sumDG}).\footnote{\new{More precisely, all solutions $u_h$ of (\ref{pilocDG})--(\ref{sumDG}) may be accompanied by $p_h\in M_h$ so that the resulting couples $(u_h,p_h)$ also solve (\ref{21})--(\ref{22}). Since the solution to (\ref{21})--(\ref{22}) is unique, the inverse statement is also true: $u_h$ given by (\ref{21})--(\ref{22}) is also a solution to (\ref{pilocDG})--(\ref{sumDG}).}} This implies that $u_h$ produced by the scheme (\ref{pilocDG})--(\ref{sumDG}) is unique. 
\end{lemma}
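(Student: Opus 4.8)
The strategy is to recognize (\ref{21})--(\ref{22}) as an abstract mixed (saddle-point) problem and apply the classical Brezzi theory, then separately match its solution with the output of the four-step scheme (\ref{pilocDG})--(\ref{sumDG}). For the well-posedness half, I would invoke the standard Babuška--Brezzi conditions: coercivity of $a_h$ on $V_h$ (here only coercivity on the whole space is needed, which is exactly Lemma \ref{lemma1}, and in fact we do not even need coercivity on the kernel of $b_h$ since $a_h$ is coercive on all of $V_h$), together with the inf-sup condition and continuity of $b_h$ on $M_h\times V_h$, which are precisely the content of Corollary \ref{corol2}. Continuity of $a_h$ on $V_h\times V_h$ with respect to the triple norm is routine (Cauchy--Schwarz on each term of (\ref{ah}) together with the trace inverse inequalities (\ref{traceinv})), and continuity of $L_h$ follows similarly. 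These ingredients give existence and uniqueness of $(u_h,p_h)$, with the mesh-independent constants inherited from Lemma \ref{lemma1} and Corollary \ref{corol2}.

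\textbf{Matching with the scheme.} Next I would show that any $u_h$ produced by (\ref{pilocDG})--(\ref{sumDG}) solves (\ref{21})--(\ref{22}) for a suitable $p_h$. Start from the decomposition $u_h=u_h^{loc}+u_h'$. Equation (\ref{22}) just says $\sum_T h_T^2\int_T q_h\,\LL u_h=\sum_T h_T^2\int_T f q_h$ for all $q_h\in M_h$, i.e. $\pi_{T,k-2}\LL(u_h|_T)=\pi_{T,k-2}f$ cell by cell; since $u_h'\in V_h'$ contributes nothing by the defining property (\ref{piglobDGprob}), this reduces exactly to (\ref{pilocDG}). So (\ref{22}) holds. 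For (\ref{21}), test against $v_h\in V_h$ and decompose $v_h=v_h^{loc}+v_h'$ with respect to the same splitting $V_h=W^{loc}\oplus V_h'$ that is implicit in the scheme — more precisely, I would first argue that $V_h=\ker(\pi_{\cdot,k-2}\LL)\big|_{V_h}\oplus V_h^{\text{compl}}$ is not quite what we want; instead use that $v_h$ splits into a part in $V_h'$ and a part supported cellwise in the range-complement that $b_h$ ``sees''. The cleanest route: for $v_h'\in V_h'$ we have $b_h(p_h,v_h')=0$ for every $p_h$ by (\ref{piglobDGprob}), so (\ref{21}) restricted to $V_h'$ is exactly (\ref{PbglobDG}); for a general $v_h$, the surjectivity of $\pi_{T,k-2}\LL$ (Lemma \ref{lemma2}) together with the inf-sup bound lets me choose $p_h\in M_h$ so that $b_h(p_h,v_h)=L_h(v_h)-a_h(u_h,v_h)$ on a complement of $V_h'$, and this $p_h$ is consistent with the $V_h'$-equation because both sides vanish there. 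Hence $(u_h,p_h)$ solves (\ref{21})--(\ref{22}).

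\textbf{Uniqueness and conclusion.} Conversely, if $(u_h,p_h)$ solves (\ref{21})--(\ref{22}), then (\ref{22}) gives (\ref{pilocDG}) for $u_h$ (so $u_h|_T$ is an admissible local solution, possibly after subtracting the $V_h'$-free part — here one must remember that (\ref{pilocDG}) has many solutions, which is why only $u_h$, not its pieces, is unique), and testing (\ref{21}) against $v_h'\in V_h'$ kills $b_h(p_h,v_h')$ and returns (\ref{PbglobDG}). Writing $u_h=u_h^{loc}+u_h'$ for any choice of local solution $u_h^{loc}$ of (\ref{pilocDG}) makes $u_h'=u_h-u_h^{loc}\in V_h'$ solve (\ref{PbglobDG}), so $u_h$ is a valid scheme output; uniqueness of the saddle-point solution then forces uniqueness of $u_h$ regardless of which $u_h^{loc}$ was picked. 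The main obstacle is the bookkeeping in this matching step: the scheme's local component $u_h^{loc}$ is genuinely non-unique (the kernel of $\pi_{T,k-2}\LL$ on $\mathbb{P}_k(T)$ is nontrivial, being $V_h'$ intersected with each cell), so one must be careful to argue at the level of $u_h=u_h^{loc}+u_h'$ and of the equations it satisfies, rather than at the level of the individual summands, and to check that the Lagrange multiplier $p_h$ is well defined by the inf-sup property rather than assuming the decomposition $v_h=v_h^{loc}+v_h'$ is $a_h$-orthogonal as in the CG case.
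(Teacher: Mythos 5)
Your proposal is correct and follows essentially the same route as the paper: well-posedness of (\ref{21})--(\ref{22}) from coercivity of $a_h$ plus the inf-sup and continuity of $b_h$ (Lemma \ref{lemma1} and Corollary \ref{corol2}), then identification with the scheme by noting that (\ref{22}) is exactly (\ref{pilocDG}) since $b_h(\cdot,u_h')=0$ on $V_h'$, and that the residual functional $v_h\mapsto L_h(v_h)-a_h(u_h^{loc}+u_h',v_h)$ vanishes on $V_h'=\ker b_h$, so the inf-sup condition supplies the multiplier $p_h$ (the paper cites Lemma 4.1 of \cite{girault} for this step, which is the same closed-range argument you sketch). Your explicit treatment of the converse direction and of the non-uniqueness of $u_h^{loc}$ matches what the paper relegates to its footnote.
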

\begin{proof} 
The existence and uniqueness of the solution to (\ref{21})--(\ref{22}) follows from the standard theory of saddle point problems, cf. for example Corollary 4.1 from \cite{girault}, thanks to the coercivity of $a_h$ (Lemma \ref{lemma1}) and to the inf-sup property on $b_h$ (Corollary \ref{corol2}). 
 
 In order to explore its relation with $u_h=u^{loc}_h+u'_h$ from (\ref{pilocDG})--(\ref{sumDG}), we note $b_h(q_h, u^{loc}_h )= \sum_{T\in\Th} h_T^2 \int_T f q_h$ for all $q_h \in M_h$ and $b_h(q_h, u_h') = 0$ for all $q_h \in M_h$ since
$u_h' \in V_h'$. We obtain thus 
\begin{equation}\label{22alt}
b_h (q_h, u_h^{loc} + u_h') = \sum_{T \in \mathcal{T}_h} h_T^2  \int_{T}{fq}_h, \quad
   \forall q_h \in M_h 
\end{equation}
Eq. (\ref{PbglobDG}) can be rewritten as
$$ L_h(v_h')-a_h (u_h^{loc} + u_h', v_h')= 0, \quad \forall v_h' \in V_h'$$ 
\new{This, together with the fact that $V_h'$ is precisely the kernel of the bilinear form $b_h$, i.e. $V_h'=\{v_h\in V_h:b(q_h,v_h)=0,\ \forall q_h\in M_h\}$, means that there exists $\tilde{p}_h\in M_h$ such that
$$
b_h(\tilde{p}_h,v_h)=L_h(v_h)-a_h(u_h^{loc} + u_h', v_h), \quad \forall v_h \in V_h,
$$
cf. Lemma 4.1 from \cite{girault}. The last equation can be rewritten as
\begin{equation}\label{21alt}
a_h(u_h^{loc} + u_h', v_h)+b_h(\tilde{p}_h,v_h)=L_h(v_h), \quad \forall v_h \in V_h,
\end{equation}
Comparing (\ref{21alt})--(\ref{22alt}) on one hand with (\ref{21})--(\ref{22}) on the other hand, we identify $u_h$ with $u_h^{loc} + u_h'$ and $p_h$ with $\tilde{p}_h$.}
\qed\end{proof}

\begin{theorem}\label{MainTh}
  Assume that the solution $u$ to (\ref{Pb}) is in $H^{k + 1} (\Omega)$. Under 
the assumptions of Lemma \ref{lemma1} and $h$ sufficiently small, the scSIP method (\ref{pilocDG})--(\ref{sumDG}) produces 
the unique solution $u_h\in V_h$, which  satisfies
  \begin{equation}\label{H1err}
    |u - u_h |_{H^1 (\mathcal{T}_h)} \leq C h^k |u|_{H^{k + 1} (\Omega)} 
  \end{equation}
  If, moreover, the elliptic regularity property holds for (\ref{Pb}), then
  \begin{equation}\label{L2err}
   \|u - u_h \|_{L^2 (\Omega)} \leq C |u|_{H^{k + 1} (\Omega)} h^{k + 1} 
  \end{equation}
\end{theorem}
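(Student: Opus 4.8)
The plan is to exploit the saddle-point reformulation (\ref{21})--(\ref{22}) of Lemma \ref{lemma3}: the pair $(u_h,p_h)$ produced by the scheme is a (nonconforming) Galerkin approximation of that mixed problem, and since $a_h$ is coercive on all of $V_h$ (Lemma \ref{lemma1}) and $b_h$ satisfies the inf--sup and continuity bounds of Corollary \ref{corol2}, a Brezzi/C\'ea-type argument will give an $H^1$ quasi-optimality estimate, after which the $L^2$ bound follows from the usual Aubin--Nitsche duality. First I would note that the exact pair $(u,0)$ solves (\ref{21})--(\ref{22}) when tested against $V_h\times M_h$: since $u\in H^{k+1}(\Omega)\subset H^2(\Omega)$ solves (\ref{Pb}), the classical SIP consistency identity gives $a_h(u,v_h)=L_h(v_h)$ for all $v_h\in V_h$ (the jumps of $u$ and the boundary mismatch vanish), while $\LL u=f$ gives $b_h(q_h,u)=\sum_{T}h_T^2\int_T q_h\LL u=\sum_{T}h_T^2\int_T q_h f$ for all $q_h\in M_h$. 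Subtracting the equations satisfied by $(u_h,p_h)$ then yields the Galerkin orthogonality
\[
 a_h(u-u_h,v_h)=b_h(p_h,v_h),\qquad b_h(q_h,u-u_h)=0,\qquad\forall\,v_h\in V_h,\ q_h\in M_h.
\]

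For the \emph{$H^1$ estimate} I would work with the augmented norm $\interleave v\interleave_{*}^{2}=\interleave v\interleave^{2}+\sum_{T}h_T\|\nabla v\|_{L^2(\partial T)}^{2}$, for which $a_h$ is continuous on $(V_h+H^2(\Omega))\times V_h$, the boundary-gradient factor of the $V_h$ argument being absorbed by the trace inverse inequality (\ref{traceinv}). Set $w_h=I_h u$ and $e_h=u_h-w_h\in V_h$. From $b_h(q_h,u-u_h)=0$ and the interpolation estimate (\ref{Interp}) — which controls exactly the quantities $\sum_T h_T^2|u-I_hu|^2_{H^2(T)}$ and $\sum_T|u-I_hu|^2_{H^1(T)}$ entering $\|\LL(u-w_h)\|_{L^2(T)}$, whence $\sup_{q_h}b_h(q_h,u-w_h)/\|q_h\|_h\le Ch^{k}|u|_{H^{k+1}(\Omega)}$ — the inf--sup property (\ref{infsupb}) provides $r_h\in V_h$ with $b_h(q_h,r_h)=b_h(q_h,e_h)$ for all $q_h\in M_h$ and $\interleave r_h\interleave\le Ch^{k}|u|_{H^{k+1}(\Omega)}$. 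Then $e_h^{0}:=e_h-r_h$ lies in the kernel $V_h'$ of $b_h$, so the orthogonality gives $a_h(u-u_h,e_h^{0})=b_h(p_h,e_h^{0})=0$, i.e. $a_h(e_h,e_h^{0})=a_h(u-w_h,e_h^{0})$; plugging this into the coercivity (\ref{ahcoer}) for $e_h^{0}$ and using the continuity of $a_h$ yields $\interleave e_h^{0}\interleave\le C(\interleave u-w_h\interleave_{*}+\interleave r_h\interleave)$. Adding up and using $\interleave u-I_hu\interleave_{*}\le Ch^{k}|u|_{H^{k+1}(\Omega)}$ (again (\ref{Interp}), which is designed precisely for this) proves (\ref{H1err}); feeding the orthogonality back through (\ref{infsupb}) also gives $\|p_h\|_h\le C\interleave u-u_h\interleave_{*}\le Ch^{k}|u|_{H^{k+1}(\Omega)}$.

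For the \emph{$L^2$ estimate}, assuming elliptic regularity I would let $z\in H^2(\Omega)\cap H^1_0(\Omega)$ solve the adjoint problem $\LL^{*}z=u-u_h$ in $\Omega$, $z=0$ on $\partial\Omega$, with $\|z\|_{H^2(\Omega)}\le C\|u-u_h\|_{L^2(\Omega)}$. Cell-wise integration by parts, using $[z]=0$ on every facet, shows $a_h(w,z)=(w,\LL^{*}z)_{L^2(\Omega)}$ for every $w\in V_h+H^2(\Omega)$; taking $w=u-u_h$, inserting $I_h z$ and using the orthogonality gives
\[
 \|u-u_h\|_{L^2(\Omega)}^{2}=a_h(u-u_h,z)=a_h(u-u_h,z-I_h z)+b_h(p_h,I_h z).
\]
The first term is $\le C\interleave u-u_h\interleave_{*}\interleave z-I_h z\interleave_{*}\le Ch^{k+1}|u|_{H^{k+1}(\Omega)}\|u-u_h\|_{L^2(\Omega)}$ by the $H^1$ estimate and the lowest-order interpolation bound $\interleave z-I_h z\interleave_{*}\le Ch|z|_{H^2(\Omega)}$; the second, split as $b_h(p_h,I_hz-z)+b_h(p_h,z)$, is controlled at the same order using $\|p_h\|_h\le Ch^{k}|u|_{H^{k+1}(\Omega)}$, the continuity of $b_h$, and $(\sum_T h_T^2\|\LL^{*}z\|^2_{L^2(T)})^{1/2}\le Ch\|z\|_{H^2(\Omega)}$. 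Dividing by $\|u-u_h\|_{L^2(\Omega)}$ gives (\ref{L2err}).

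I expect that no single step is genuinely hard once the saddle-point structure of Lemma \ref{lemma3} is in place; the delicate points are bookkeeping. One must check that the SIP consistency identity and the cell-wise integrations by parts go through with the present facet-diameter-free definition of $h_E$ (they do, since both $b_h$ and the triple norm only involve whole cell boundaries, never individual facets), and that the interpolation estimate (\ref{Interp}) — including its boundary-gradient term, together with a lowest-order analogue valid on $H^2(\Omega)$ for the dual variable $z$ — matches the augmented norm $\interleave\cdot\interleave_{*}$ so that every error factor carries the announced power of $h$. Hypothesis (\ref{Interp}) is tailored exactly so that this matching works, so the main effort is really in organising these estimates rather than in any conceptual difficulty.
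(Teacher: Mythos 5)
Your proposal is correct and follows essentially the same route as the paper: the saddle-point reformulation of Lemma \ref{lemma3}, consistency of the pair $(u,0)$, the coercivity of $a_h$ plus the inf--sup condition on $b_h$ for the quasi-optimal $\interleave\cdot\interleave$/$\|\cdot\|_h$ estimate, and an Aubin--Nitsche duality argument for (\ref{L2err}). The only difference is presentational: you unpack the abstract saddle-point error estimate (which the paper delegates to Proposition 2.36 of \cite{ern}) into an explicit Brezzi-type argument with the kernel correction $e_h=e_h^0+r_h$, and you correctly flag the same low-order interpolation bound for the dual variable $z\in H^2(\Omega)$ that the paper also uses implicitly.
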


\begin{proof}
  We shall use the saddle point reformulation (\ref{21})--(\ref{22}). 
This discretization is consistent. Indeed setting $p = 0$ we have
  \begin{align*}
   a_h (u, v_h) + b_h (p, v_h) &= L_h(v_h)
     , &&\forall v_h \in V_h \\
   b_h (q_h, u) &= \sum_{T \in \mathcal{T}_h} h_T^2  \int_T
     {fq}_h, &&\forall q_h \in M_h 
  \end{align*}
  Thus, by the standard approximation theory for saddle point problems, cf. for example 
Proposition 2.36 from \cite{ern}, recalling the coercivity of $a_h$ (Lemma \ref{lemma1}) and the inf-sup property on $b_h$ (Corollary \ref{corol2}), we get   
  \begin{align*}
    \interleave u_h - I_h u \interleave + \|p_h \|_h &\leq
   C\sup_{\scriptsize{\begin{array}{l}
     (v_h, q_h) \in V_h \times M_h
     \notag\\
     \interleave v_h \interleave + \| q_h \|_h = 1
   \end{array}}} (a_h (u_h - I_h u, v_h) + b_h (p_h, v_h) + b_h (q_h, u_h -
   I_h u)) 
   \notag\\
   &= C\sup_{\scriptsize{\begin{array}{l}
     (v_h, q_h) \in V_h \times M_h\\
     \interleave v_h \interleave + \| q_h \|_h = 1
   \end{array}}} (a_h (u_{} - I_h u, v_h) + b_h (q_h, u_{} - I_h u)) \\
  &\leq C \left( \interleave u - I_h u \interleave_a^2 + \sum_{T \in
   \mathcal{T}_h} h_T^2 \| \LL (u - I_h u) \|_{L^2 (T)}^2
   \right)^{\frac{1}{2}} 
  \end{align*}
with the augmented triple norm $\interleave \cdot \interleave_a$ defined by
$$
\interleave v \interleave_a^2 := \interleave v \interleave^2 +
  \sum_{E \in \mathcal{E}_h} h_E\| \{A \nabla v \cdot n\} \|_{L^2(E)}^2
$$
Applying the interpolation estimates (\ref{Interp}) and the 
triangle inequality gives 
\begin{equation}
  \interleave u - I_h u \interleave_a + \|p_h \|_h \leq C h^k |u|_{H^{k + 1}}
      \label{estuhph} 
\end{equation}
This implies in particular (\ref{H1err}).

To prove the $L^2$ error estimate, we consider the auxiliary problem for $z\in H^2(\Omega)$
  \[ {\LL z} = u - u_h \text{ in } \Omega , \quad z = 0 \text{ on } \partial\Omega 
\]
  Then, for all $v \in H^1 (\mathcal{T}_h)$ and $q \in L^2 (\Omega)$,
  \[ a_h (v, z) + b_h (q, z) = \int_{\Omega} (u - u_h) v + \sum_{T
     \in \mathcal{T}_h} h_T^2  \int_T (u - u_h) q \]
  Setting $v = u
  - u_h$ and $q = p - p_h$ (with $p=0$) and using Galerkin orthogonality yields
    \begin{align*}
    \|u - u_h \|_{L^2 (\Omega)}^2 + \sum_{T \in \mathcal{T}_h} h_T^2  \int_T
    (u - u_h)  (p - p_h) & = a_h  (u - u_h, z) + b_h  (p - p_h, z)\\
    & = a_h  (u - u_h, z - z_h) + b_h  (p - p_h, z - z_h)
  \end{align*}
    for any $z_h \in V_h$. Thus, taking $z_h=I_h z$ and applying the interpolation estimates,
  \begin{align*}
    \|u - u_h \|_{L^2 (\Omega)}^2 & \leq |a_h (u - u_h, z -
    z_h) | + h \|p - p_h \|_h \left( \sum_{T \in \mathcal{T}_h} \|\LL z - \LL z_h
    \|_{L^2 (T)}^2 \right)^{1 / 2} \\
    &\qquad + h \|p - p_h \|_h  \|u - u_h\|_{L^2 (\Omega)}\\
    & \leq {Ch} \interleave u-I_hu \interleave_a |z|_{H^2 (\Omega)}
    + {Ch} \|p_h \|_h  (|z|_{H^2 (\Omega)} +\|u - u_h
    \|_{L^2 (\Omega)})
  \end{align*}
   Recalling $|z|_{H^2 (\Omega)} \leq C \|u - u_h \|_{L^2(\Omega)}$ and (\ref{estuhph}) yields (\ref{L2err}).
\qed\end{proof}

\section{An example of assumptions on the mesh that guarantee the
interpolation and inverse estimates}\label{SecMesh}

In this section, we adopt the following assumptions on the mesh.
\begin{description}
  \item[M1:] $\Th$ is shape regular  in the sense (\ref{shape}) with a parameter 
$\rho_1 >  1$.
  
  \item[M2:] $\Th$ is locally quasi-uniform  in the following sense: \new{for any two mesh cells $T,T' \in \mathcal{T}_h$ such that $B_{T'}\cap B_T\not=\varnothing$ there holds
  \[ \frac{1}{\rho_2} h_{T'} \leq h_T \leq \rho_2 h_{T'} \]
  with a parameter $\rho_2 >  1$.}
  \item[M3:] The cell boundaries are not too wiggly: for all $T \in 
\mathcal{T}_h$
  \[ | \partial T| \leq \rho_3 h_T^{d - 1} \]
  with a parameter $\rho_3 >  0$.
\end{description}
We shall show that these assumptions allow us to construct an interpolation operator $I_h$ to the discontinuous finite element space (\ref{spacek}) for $k\ge 2$ and to prove the interpolation error estimate (\ref{Interp}) and the inverse estimates (\ref{traceinv})--(\ref{H2inv}).

{\medskip}First of all, the assumptions that the mesh is shape regular and 
locally quasi-uniform entail the following\\
\begin{lemma}\label{nballs} \new{Define, for any $x\in\mathbb{R}^d$, 
$$N_{{ball}} (x)=\#\{ T\in\Th : x\in B_T\}$$
with $\#$ standing for the ``number of''. 
Under assumptions M1 and M2, there holds $$N_{{ball}} (x)\le N_{{int}},\quad \forall 
x\in\mathbb{R}^d$$} with a constant $N_{{int}}$ depending only on $\rho_1$ and 
$\rho_2$.
\end{lemma}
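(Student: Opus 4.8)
The plan is to estimate $N_{\mathrm{ball}}(x)$ by a volume-packing argument. Fix $x \in \mathbb{R}^d$ and let $T \in \mathcal{T}_h$ be any cell with $x \in B_T$. First I would use shape regularity to control the size of such a cell: since $x \in B_T$ and $B_T$ has radius $R_T \le \rho_1 h_T$, while $B_T^{in}$ has radius $r_T \ge h_T/(2\rho_1)$, the cell $T$ contains a ball of radius $r_T$ that is itself contained in $B_T$, hence within distance $2R_T \le 2\rho_1 h_T$ of $x$. The key additional input is local quasi-uniformity (M2): I need all the competing cells to have comparable diameters. For this I would first pin down the diameter of \emph{one} such cell, say $h_{T_0}$ for a fixed $T_0$ with $x\in B_{T_0}$; then for any other $T$ with $x \in B_T$ we have $x \in B_T \cap B_{T_0} \neq \varnothing$, so M2 gives $h_{T} \le \rho_2 h_{T_0}$ and $h_{T} \ge h_{T_0}/\rho_2$. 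Thus all cells counted by $N_{\mathrm{ball}}(x)$ have diameter within a factor $\rho_2$ of a common value $H := h_{T_0}$.

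Next I would carry out the packing estimate. Each such cell $T$ is disjoint from the others (the cells of $\mathcal{T}_h$ are mutually disjoint), contains the inscribed ball $B_T^{in}$ of radius $r_T \ge h_T/(2\rho_1) \ge H/(2\rho_1\rho_2)$, and is contained in $B_T$, which has radius $R_T \le \rho_1 h_T \le \rho_1\rho_2 H$ and center within distance $R_T$ of $x$, hence $B_T \subset B(x, 2\rho_1\rho_2 H)$. Therefore the balls $\{B_T^{in}\}$, over all $T$ counted by $N_{\mathrm{ball}}(x)$, are pairwise disjoint, each of volume $\ge \omega_d (H/(2\rho_1\rho_2))^d$ (with $\omega_d$ the volume of the unit ball), and all contained in the ball $B(x, 2\rho_1\rho_2 H)$ of volume $\omega_d (2\rho_1\rho_2 H)^d$. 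Comparing total volumes gives
\[
N_{\mathrm{ball}}(x) \, \omega_d \left(\frac{H}{2\rho_1\rho_2}\right)^d \le \omega_d (2\rho_1\rho_2 H)^d,
\]
so $N_{\mathrm{ball}}(x) \le (4\rho_1^2\rho_2^2)^d =: N_{\mathrm{int}}$, which depends only on $\rho_1$ and $\rho_2$ as claimed.

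The only mildly delicate point is the very first step: to invoke M2 one needs at least one reference cell $T_0$ with $x \in B_{T_0}$, and one must make sure the bound $N_{\mathrm{int}}$ does not secretly depend on which $T_0$ is chosen — it does not, since every cell in the count is within a factor $\rho_2$ of \emph{any} fixed one of them, by M2 applied pairwise through $x$. If no cell satisfies $x \in B_T$ then $N_{\mathrm{ball}}(x)=0$ and there is nothing to prove. I do not anticipate a real obstacle here; the argument is the standard finite-overlap estimate, and the contribution of the present setting is simply that shape regularity plus local quasi-uniformity are exactly what is needed to run it on general (non-simplicial, possibly curved) cells.
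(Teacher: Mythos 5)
Your proposal is correct and follows essentially the same route as the paper: fix a reference cell through $x$ to calibrate the common length scale via M2, then pack the pairwise disjoint inscribed balls $B_T^{in}$ (radius bounded below by shape regularity and local quasi-uniformity) into the ball $B(x,2\rho_1\rho_2 H)$ and compare volumes, arriving at the same constant $(2\rho_1\rho_2)^{2d}$. No gaps.
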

\begin{proof}
\new{Take any $x\in\mathbb{R}^d$ with $N_{{ball}} (x)>0$ (otherwise, for $x$ with $N_{{ball}} (x)=0$, there is nothing to prove). We now choose arbitrarily $T' \in \mathcal{T}_h$ such that $x \in B_{T'}$, set $h_x=h_{T'}$, and then consider all the mesh cells $T$ such that $x \in B_{T'}$.} By Assumption M2, $h_T \leq \rho_2 h_x$ for any such $T$. Hence, by Assumption M1, $R_T \leq \rho_1\rho_2 h_x$, so that $T$ is inside the ball $B_x$ of radius $2\rho_1 \rho_2 h_x$ centered
at $x$. Recall that $T$ contains an inscribed ball of radius
$r_T \geq \frac{R_T}{\rho_1} \geq \frac{h_T}{2\rho_1} \geq 
\frac{h_x}{2\rho_1\rho_2}$. If there are
several such cells $T$, then their respective inscribed balls $B_T^{in}$ do not 
intersect each other and they are all inside $B_x$. Thus, their number satisfies the bound 
\[ N_{{ball}} (x) \le
  \frac{|B_x|}{\min\limits_{T \in \mathcal{T}_h : x\in B_T}|B_T^{in}|} \le
   \frac{(2\rho_1\rho_2 h_x)^d}{\left( \frac{h_x}{2\rho_1\rho_2}  \right)^d} = \left(
   {2\rho_1\rho_2} \right)^{2d} \]
as announced.\qed\end{proof}

Recall that $V_h$ is the discontinuous FE space on $\mathcal{T}_h$ of
degree $k \ge 2$, cf. (\ref{spacek}). 

\begin{lemma}\label{localInt} \textbf{(Local interpolation estimate)} Take any 
$T \in \mathcal{T}_h$. Let
$\pi_h$ denote the $L^2 (B_T)$-orthogonal projection to the space of
polynomials, i.e. given $v\in L^2(B_T)$, $v_h = \pi_h v$ is a polynomial of degree  
$\le k$ such that
\[ \int_{B_T} v_h \varphi_h = \int_{B_T} v \varphi_h \hspace{1em} \forall
   \varphi_h \in \mathbb{P}^k(T) \]
Under Assumptions M1 and M3, we have then for any $v \in H^{k + 1} (B_T)$
\begin{multline*} |v - v_h |_{H^1 (T)} + \frac{1}{h_T}  \|v - v_h
   \|_{L^2 (T)} + h_T |v - v_h |_{H^2 (T)} \\ + \sqrt{h_T}  \| \nabla( v - v_h) 
\|_{L^2  (\partial T)} + \frac{1}{\sqrt{h_T}} 
   \|v - v_h \|_{L^2  (\partial T)} \leq Ch_T^k |v|_{H^{k + 1}
   (B_T)} 
\end{multline*}   
with a constant $C>0$ depending only on $\rho_1$ and $\rho_3$. 
\end{lemma}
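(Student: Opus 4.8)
The plan is to reduce the estimate on the cell $T$ to a standard polynomial approximation estimate on the ball $B_T$, where classical scaled Bramble–Hilbert arguments apply, and then transfer the interior $H^1$, $L^2$, $H^2$ bounds from $B_T$ to $T\subset B_T$ for free, while the boundary terms on $\partial T$ are handled by a trace inequality combined with Assumption M3. First I would record that, since $\pi_h$ is the $L^2(B_T)$-orthogonal projection onto $\mathbb P^k$ and $B_T$ is a ball of radius $R_T$ with $h_T\le 2R_T\le 2\rho_1 h_T$ by shape regularity (M1), the classical estimate on a ball gives
\[
  |v-v_h|_{H^m(B_T)} \le C R_T^{k+1-m}|v|_{H^{k+1}(B_T)} \le C h_T^{k+1-m}|v|_{H^{k+1}(B_T)},\qquad m=0,1,2,
\]
with $C$ depending only on $\rho_1$ (through the shape of the reference ball, which is fixed, so in fact only an absolute constant times powers absorbed via $R_T\simeq h_T$). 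This is the Deny–Lions/Bramble–Hilbert lemma on a ball, obtained by scaling to the unit ball; I would simply cite it. Since $T\subset B_T$, restricting these bounds to $T$ immediately yields the first three terms $|v-v_h|_{H^1(T)}$, $\tfrac1{h_T}\|v-v_h\|_{L^2(T)}$, $h_T|v-v_h|_{H^2(T)}$, each bounded by $Ch_T^k|v|_{H^{k+1}(B_T)}$.

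For the two boundary terms, set $w=v-v_h$; I would use a trace-type inequality on the Lipschitz domain $T$ of the form
\[
  \|w\|_{L^2(\partial T)}^2 \le C\Big(\frac{|\partial T|}{|T|}\,\|w\|_{L^2(T)}^2 + |\partial T|\,h_T\,|w|_{H^1(T)}^2\Big)
\]
and similarly for $\nabla w$ with one more derivative. Such an inequality is standard for star-shaped domains with explicit dependence on $\operatorname{diam}(T)=h_T$ and on $|\partial T|$, $|T|$; shape regularity guarantees $T$ is star-shaped with respect to $B_T^{in}$, and M1 gives $|T|\ge |B_T^{in}|\ge c\,h_T^d$. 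Combining this with Assumption M3, $|\partial T|\le\rho_3 h_T^{d-1}$, yields $|\partial T|/|T|\le C/h_T$, so
\[
  \|w\|_{L^2(\partial T)}^2 \le \frac{C}{h_T}\|w\|_{L^2(T)}^2 + C h_T |w|_{H^1(T)}^2 \le C h_T^{2k+1}|v|_{H^{k+1}(B_T)}^2,
\]
which gives the $\tfrac1{\sqrt{h_T}}\|w\|_{L^2(\partial T)}$ term; the analogous computation one derivative up, using the $H^1$ and $H^2$ bounds on $w$ over $B_T$, gives $\sqrt{h_T}\|\nabla w\|_{L^2(\partial T)}\le C h_T^k|v|_{H^{k+1}(B_T)}$.

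The only real point requiring care — the ``hard part'' — is making the trace inequality on the general Lipschitz cell $T$ quantitative with the correct powers of $h_T$ and the explicit factor $|\partial T|$, since $T$ is not a polytope and we cannot reference-map it to a fixed domain. I would handle this by a direct integration-by-parts / divergence-theorem argument: pick a smooth vector field $\Phi$ with $\Phi\cdot n\ge c>0$ on $\partial T$ and $\|\Phi\|_\infty$, $h_T\|\nabla\Phi\|_\infty$ controlled — for instance $\Phi(x)=(x-x^0)/r_T$ with $x^0,r_T$ the center and radius of $B_T^{in}$, for which $\Phi\cdot n\ge c(\rho_1)>0$ by star-shapedness and $|\Phi|\le R_T/r_T\le\rho_1$, $|\nabla\Phi|=d/r_T\le C/h_T$ — and then $\int_{\partial T}(\Phi\cdot n)w^2 = \int_T \operatorname{div}(\Phi w^2)$, which after the product rule and Cauchy–Schwarz gives exactly the claimed bound. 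All constants then depend only on $\rho_1$ (via the star-shapedness constant and $R_T/r_T$) and on $\rho_3$ (via $|\partial T|$), as asserted. Assumption M2 is not needed at this local level; it only enters the global assembly in the next lemma via the finite-overlap bound of Lemma \ref{nballs}.
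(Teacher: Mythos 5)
Your treatment of the three interior terms (via Deny--Lions on the ball $B_T$ and restriction to $T\subset B_T$) is correct and close in spirit to the paper. The gap is in the boundary terms, which you yourself identify as the hard part. You justify your trace inequality on $T$ by a divergence-theorem argument with the field $\Phi(x)=(x-x^0)/r_T$, asserting that ``shape regularity guarantees $T$ is star-shaped with respect to $B_T^{in}$'' so that $\Phi\cdot n\ge c(\rho_1)>0$ on $\partial T$. This is false: Assumption M1 only bounds the ratio $R_T/r_T$ and Assumption M3 only bounds the measure of $\partial T$; neither implies star-shapedness. The cells admitted here are general Lipschitz (possibly curved, non-convex) subdomains --- an L-shaped or horseshoe-shaped cell satisfies M1 and M3 with moderate constants yet has boundary points where $(x-x^0)\cdot n<0$, so your integration by parts does not yield a sign-definite boundary term. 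Worse, the trace inequality you invoke, with a constant depending only on $\rho_1$ and $\rho_3$, is itself false on this class of cells: attach to a unit square a thin rectangular spike of width $\epsilon$ and length $1/2$; M1 and M3 hold uniformly in $\epsilon$, but for a function equal to $1$ near the tip of the spike and vanishing in the square, the ratio $\|w\|^2_{L^2(\partial T)}\big/\bigl(h_T^{-1}\|w\|^2_{L^2(T)}+h_T|w|^2_{H^1(T)}\bigr)$ blows up like $1/\epsilon$. Hence no argument that passes through a generic $H^1(T)\to L^2(\partial T)$ trace bound can close the proof under M1 and M3 alone.

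The paper avoids this entirely by exploiting that $w=v-v_h$ is controlled on the whole ball $B_T\supset T$, not merely on $T$: since $k\ge 2$ and $d\le 3$, $H^{k+1}(B_T)$ embeds into $W^{1,\infty}(B_T)$, and Deny--Lions plus scaling give $\|w\|_{L^{\infty}(B_T)}\le Ch_T^{k+1-d/2}|v|_{H^{k+1}(B_T)}$ and $\|\nabla w\|_{L^{\infty}(B_T)}\le Ch_T^{k-d/2}|v|_{H^{k+1}(B_T)}$. The boundary terms are then bounded crudely by $\|w\|_{L^2(\partial T)}\le |\partial T|^{1/2}\|w\|_{L^{\infty}(B_T)}$, so M3 enters only through $|\partial T|\le\rho_3 h_T^{d-1}$ and no trace inequality on $T$ is needed; this is precisely why the restriction $k\ge 2$ appears. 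Your proof can be repaired by replacing your boundary step with this $L^{\infty}$ argument (or, alternatively, by adding a star-shapedness hypothesis to the mesh assumptions, which the paper deliberately avoids).
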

\begin{proof}
Since $H^{k + 1} (B_T)$ is embedded   into $L^{\infty} (B_T)$, 
\new{Deny-Lions lemma together with a scaling argument (cf. Theorem 15.3 from \cite{ciarlet91})} entail
\[ \|v - v_h \|_{L^{\infty} (B_T)} \leq Ch_T^{k + 1 - d / 2}
   |v|_{H^{k + 1} (B_T)} \]
Hence, 
\[ \|v - v_h \|_{L^2(T)} \leq |T|^{\frac 12}\|v - v_h \|_{L^{\infty} (B_T)} 
    \leq Ch_T^{k + 1}  |v|_{H^{k + 1} (B_T)} \]
and, in view of the hypothesis $| \partial T| \leq \rho_3 h_T^{d-1}$, 
$$\|v - v_h \|_{L^2  (\partial T)} \leq (\rho_3 h_T^{d-1})^{\frac 12}\|v - v_h 
\|_{L^{\infty} (B_T)} 
\leq Ch_T^{k + 1 / 2}|v|_{H^{k + 1} (B_T)}$$

 The estimates for $|v-v_h|_{H^1  (T)}$ and $\| \nabla v -\nabla v_h \|_{L^2  
(\partial T)}$ are proven in the same way starting from 
\[ \|\nabla v -\nabla v_h \|_{L^{\infty} (B_T)} \leq Ch_T^{k  - d / 2}   
|v|_{H^{k + 1} (B_T)} \]
This is valid since  $H^{k + 1} (B_T)$ is embedded into $W^{1,\infty} 
(B_T)$ for $k\geq 2$.

Finally, the estimate for $|v-v_h|_{H^2(T)}$ holds thanks to  the 
embedding   of $H^{k + 1} (B_T)$ into $H^{2} (B_T)$ ($k\geq 1$).

\qed\end{proof}

\begin{lemma}\label{globalInt}{\textbf{(Global interpolation estimate)}} Let 
$I_h : H^{k+1}
(\Omega) \rightarrow V_h$ denote the operator obtained by first extending a 
function $v\in H^{k+1}
(\Omega)$ by a function $\tilde{v}\in H^{k+1}(\mathbb{R}^d)$ and then applying 
the local
operator $\pi_h$ from Lemma \ref{localInt} to $\tilde{v}$ on every $T \in 
\mathcal{T}_h$, i.e. $I_hv|_T:=(\pi_h\tilde{v})|_T$ on any $T\in\Th$. 
Then, under Assumptions M1--M3, (\ref{Interp}) holds for any $v\in 
H^{k+1}(\Omega)$
\end{lemma}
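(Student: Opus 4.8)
The plan is to derive (\ref{Interp}) by summing the local estimate of Lemma \ref{localInt} cell by cell and then converting the resulting sum over the enclosing balls $B_T$ into a sum over the cells, using that each point of $\mathbb{R}^d$ is covered by at most $N_{int}$ of these balls (Lemma \ref{nballs}) and that neighbouring cells have comparable diameters (Assumption M2). Concretely, I would first recall that, $\Omega$ being Lipschitz, the extension $v\mapsto\tilde v$ can be chosen to be a bounded linear operator $H^{k+1}(\Omega)\to H^{k+1}(\mathbb{R}^d)$ with $\tilde v|_\Omega=v$; then $\tilde v|_T=v|_T$ and $I_hv|_T=(\pi_h\tilde v)|_T\in\mathbb{P}^k(T)$ for every $T\in\Th$, so on each cell $v-I_hv$ coincides with $\tilde v-\pi_h\tilde v$. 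Since M1 and M3 hold, Lemma \ref{localInt} applied to $\tilde v$ on each $T\in\Th$ gives
\begin{multline*}
 |v-I_hv|_{H^1(T)}+\frac{1}{h_T}\|v-I_hv\|_{L^2(T)}+h_T|v-I_hv|_{H^2(T)}\\
 +\sqrt{h_T}\,\|\nabla(v-I_hv)\|_{L^2(\partial T)}+\frac{1}{\sqrt{h_T}}\|v-I_hv\|_{L^2(\partial T)}\le C h_T^k|\tilde v|_{H^{k+1}(B_T)}.
\end{multline*}
Each of the five terms on the left is therefore $\le C h_T^k|\tilde v|_{H^{k+1}(B_T)}$; squaring and summing over $T\in\Th$ shows that the square of the left-hand side of (\ref{Interp}) is bounded by $5C^2\sum_{T\in\Th} h_T^{2k}|\tilde v|^2_{H^{k+1}(B_T)}$, so everything reduces to proving
\[ \sum_{T\in\Th} h_T^{2k}\,|\tilde v|^2_{H^{k+1}(B_T)}\ \le\ C\sum_{T\in\Th} h_T^{2k}\,|v|^2_{H^{k+1}(T)}. \]

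Writing $|\tilde v|^2_{H^{k+1}(B_T)}=\int_{B_T}g$ with $g$ the pointwise sum of squares of the order-$(k+1)$ derivatives of $\tilde v$, and exchanging the sum with the integral, the left-hand side above equals $\int_{\mathbb{R}^d}g(x)\,w(x)\,dx$ with $w(x)=\sum_{T:\,x\in B_T}h_T^{2k}$. For $x\in\Omega$ one has $g=\sum_{|\alpha|=k+1}|\partial^\alpha v|^2$ a.e.; if $T_x$ denotes the cell containing $x$, then any $T$ with $x\in B_T$ satisfies $x\in B_T\cap B_{T_x}$, hence $h_T\le\rho_2 h_{T_x}$ by M2, while Lemma \ref{nballs} bounds the number of such $T$ by $N_{int}$, so $w(x)\le N_{int}\rho_2^{2k}h_{T_x}^{2k}$ on $\Omega$. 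Integrating over $\Omega$ gives $\int_\Omega g w\le N_{int}\rho_2^{2k}\sum_{T\in\Th}h_T^{2k}|v|^2_{H^{k+1}(T)}$, which is of the desired form.

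The remaining term $\int_{\mathbb{R}^d\setminus\Omega}g w$ — the contribution of the caps $B_T\setminus\Omega$ by which the enclosing balls of cells touching $\partial\Omega$ stick out of the domain — is the genuinely delicate point, and I expect it to be the main obstacle. I would handle it by exploiting the locality of a Stein-type extension: for a cell $T$ abutting $\partial\Omega$ at a point $p$, $\tilde v$ on the cap $B_T\setminus\Omega$ (whose diameter is $\le C h_T$) is controlled by $v$ on a piece of $\Omega$ of comparable size near $p$, which, by the Lipschitz property of $\partial\Omega$ and again by M2, is covered by a bounded number of cells $T'$ with $h_{T'}\sim h_T$; summing as before, with a bounded-overlap count of these neighbourhoods, bounds $\sum_T h_T^{2k}|\tilde v|^2_{H^{k+1}(B_T\setminus\Omega)}$ by $C\sum_{T\in\Th}h_T^{2k}|v|^2_{H^{k+1}(T)}$ and closes the estimate. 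All the other steps are a routine combination of Lemmas \ref{localInt} and \ref{nballs} with M2. If one is content with the slightly coarser right-hand side $Ch^k\|v\|_{H^{k+1}(\Omega)}$ — still sufficient for Theorem \ref{MainTh} and the corresponding SIP error estimate — this last step collapses to the trivial bound $\sum_{T\in\Th}h_T^{2k}|\tilde v|^2_{H^{k+1}(B_T)}\le h^{2k}N_{int}|\tilde v|^2_{H^{k+1}(\mathbb{R}^d)}\le Ch^{2k}\|v\|^2_{H^{k+1}(\Omega)}$.
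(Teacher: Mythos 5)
Your argument is essentially the paper's own proof: extend $v$ by a bounded extension operator, apply Lemma \ref{localInt} on each cell, rewrite $\sum_{T}h_T^{2k}|\tilde v|^2_{H^{k+1}(B_T)}$ as an integral against the weight $w(x)=\sum_{T:\,x\in B_T}h_T^{2k}$, and bound $w$ by $N_{int}\rho_2^{2k}h_{T_x}^{2k}$ using Lemma \ref{nballs} and Assumption M2. The only difference is that you explicitly isolate the contribution of the caps $B_T\setminus\Omega$, which the paper's displayed chain passes over by restricting the integral to $\Omega$ without comment; your observation that the trivial global bound $Ch^{2k}\|\tilde v\|^2_{H^{k+1}(\mathbb{R}^d)}\le Ch^{2k}\|v\|^2_{H^{k+1}(\Omega)}$ already suffices for the error estimates is a legitimate way to settle that point.
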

\begin{proof}
First, extension theorem for Sobolev spaces \cite{Adams} insure that there 
exists $\tilde{v} \in H^{k + 1} (\mathbb{R}^d)$ such that
$$
\tilde{v}=v\text{ on }\Omega\quad\text{and}\quad \|\tilde{v}\|_{H^{k + 1} 
(\mathbb{R}^d)}\le C\|\tilde{v}\|_{H^{k + 1} (\Omega})
$$
To prove (\ref{Interp}), we sum the local interpolation estimates of Lemma 
\ref{localInt} over all the mesh
cells and then use Assumption M2 and Lemma \ref{nballs}: 
\begin{multline*}
  \sum_{T \in \mathcal{T}_h} \left( |v - v_h |^2_{H^1 (T)} +
  \frac{1}{h^2_T} \|v - v_h \|^2_{L^2 (T)}
	+ h_T^2|v - v_h |^2_{H^2(T)}
	+ h_T \| \nabla v - \nabla v_h \|^2_{L^2 (\partial T)} + \frac{1}{h_T}
  \|v - v_h ||^2_{L^2 (\partial T)} \right)\\
  \leq C \sum_{T \in \mathcal{T}_h} h_T^{2 k}\int_{B_T} |
  \nabla^{k + 1} v|^2 {dx} 
	\le C  \int_{_{\Omega}} \new{\left(\max\limits_{T \in \mathcal{T}_h : x\in B_{T}} h_{T}\right)^{2 k}} N_{{ball}} (x)  | \nabla^{k + 1} v|^2 {dx} \\ 
    \leq {CN}_{{int}} \new{\rho_2^{2k}} \sum_{T'\in \mathcal{T}_h} h_{T'}^{2 k} 
|v|^2_{H^{k + 1} (T)}
\end{multline*}
\qed\end{proof}

\begin{lemma}\label{LemInv}{\textbf{(Inverse inequalities)}} Under 
assumptions M1 and M3, (\ref{traceinv}) and (\ref{H2inv}) hold for any $v_h 
\in V_h$ and any $T \in \mathcal{T}_h$.
\end{lemma}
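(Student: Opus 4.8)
The plan is to prove the inverse inequalities $\|v_h\|_{L^2(\partial T)} \le C h_T^{-1/2}\|v_h\|_{L^2(T)}$, $\|\nabla v_h\|_{L^2(\partial T)} \le C h_T^{-1/2}\|\nabla v_h\|_{L^2(T)}$ and $|v_h|_{H^2(T)} \le C h_T^{-1} |v_h|_{H^1(T)}$ by the standard route: establish them first on a fixed reference configuration, then transport them to an arbitrary cell $T$ by an affine scaling, carefully tracking how the constants depend only on $\rho_1$ and $\rho_3$. The key point is that although the cells $T$ are of very general shape, $v_h|_T$ is a polynomial of degree $\le k$, hence determined by its values on any ball, and the relevant norms of polynomials are all equivalent on a fixed domain with constants depending only on $k$ and the domain.

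First I would use shape regularity (M1): since $B_T^{in}\subset T\subset B_T$ with $r_T \le R_T \le \rho_1 r_T$, after translating so that the center of $B_T^{in}$ is at the origin and rescaling by $1/r_T$, the image $\hat T$ of $T$ satisfies $B(0,1)\subset \hat T\subset B(0,\rho_1)$. On the inscribed unit ball $B(0,1)$ all norms of polynomials of degree $\le k$ are comparable; in particular, for $\hat v_h := v_h$ composed with the inverse scaling, one has $\|\hat v_h\|_{L^\infty(B(0,\rho_1))} \le C(k,\rho_1)\|\hat v_h\|_{L^2(B(0,1))}$ and likewise for $\nabla\hat v_h$ and second derivatives, because a polynomial bounded in $L^2$ on $B(0,1)$ is bounded in any $C^m$ norm on the larger ball $B(0,\rho_1)$ with constant depending only on $k,m,\rho_1$. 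This is the step that replaces the usual reference-element argument and is where all the geometric generality is absorbed.

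Next I would turn the $L^\infty$ bounds into the claimed trace and $H^2$ estimates on $\hat T$, and then scale back. For the trace inequality, $\|\hat v_h\|_{L^2(\partial\hat T)}^2 \le |\partial\hat T|\,\|\hat v_h\|_{L^\infty(\hat T)}^2$, and here Assumption M3 enters: $|\partial T|\le \rho_3 h_T^{d-1}$ together with $h_T\le 2\rho_1 r_T$ gives $|\partial\hat T| = r_T^{-(d-1)}|\partial T|\le \rho_3 (h_T/r_T)^{d-1}\le C(\rho_1,\rho_3)$, so $|\partial\hat T|$ is bounded by a constant. Combining with $\|\hat v_h\|_{L^\infty(\hat T)}\le C\|\hat v_h\|_{L^2(B(0,1))}\le C\|\hat v_h\|_{L^2(\hat T)}$ yields $\|\hat v_h\|_{L^2(\partial\hat T)}\le C\|\hat v_h\|_{L^2(\hat T)}$ on the reference configuration; undoing the scaling $x = r_T \hat x$ multiplies $L^2(\partial T)$ norms by $r_T^{(d-1)/2}$ and $L^2(T)$ norms by $r_T^{d/2}$, producing the factor $r_T^{-1/2}\simeq h_T^{-1/2}$. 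For the gradient trace estimate I would apply the same reasoning to $\nabla\hat v_h$ (a vector of polynomials of degree $\le k-1$), and for the $H^2$ estimate I would bound $|\hat v_h|_{H^2(\hat T)} \le C\|\nabla\hat v_h\|_{L^\infty(\hat T)}\cdot|\hat T|^{1/2} \le C\|\nabla\hat v_h\|_{L^2(B(0,1))}$, then scale back to get the $h_T^{-1}$ factor relating $H^2(T)$ to $H^1(T)$. Note that $|\hat T|\le |B(0,\rho_1)|$ is again controlled by $\rho_1$ alone, so Assumption M3 is only needed for the boundary estimates.

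The main obstacle — really the only non-routine point — is justifying the local polynomial inverse estimate $\|\hat v_h\|_{C^m(B(0,\rho_1))} \le C(k,m,\rho_1)\|\hat v_h\|_{L^2(B(0,1))}$ uniformly over all admissible $\hat T$. This follows from the finite-dimensionality of $\mathbb{P}_k$ and equivalence of all norms on it: the map $p\mapsto \|p\|_{L^2(B(0,1))}$ is a norm on $\mathbb{P}_k$ (a polynomial vanishing on an open ball is zero), hence equivalent to the $C^m(B(0,\rho_1))$ norm, with a constant that depends only on $k$, $m$ and $\rho_1$ and not on $\hat T$ at all. Everything else is bookkeeping of scaling exponents, so I would present the argument for one representative estimate in detail and indicate that the other two are analogous.
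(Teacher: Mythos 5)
Your proposal is correct and follows essentially the same route as the paper: both rest on the equivalence of norms for polynomials of fixed degree on the inscribed/circumscribed ball configuration controlled by $\rho_1$, use M3 only to bound $|\partial T|\leq\rho_3 h_T^{d-1}$ in the trace estimates, and recover the powers of $h_T$ by scaling. The only cosmetic slip is that after normalizing $B_T^{in}$ to the unit ball one gets $\hat T\subset B(0,2\rho_1)$ rather than $B(0,\rho_1)$, which changes nothing.
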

\begin{proof}
Both bounds in (\ref{traceinv}) follow immediately from the following one: for 
any polynomial
$q_h$ of degree $\le l$ one has
\[ \|q_h \|_{L^{\infty} (T)} \leq \frac{C}{h^{d / 2}_T} \|q_h\|_{L^2(T)} \]
with $C>0$ depending only on $\rho_1$ and $l$. This follows in turn from
\[ \|q_h \|_{L^{\infty} (B_T)} \leq \frac{C}{h^{d / 2}_T} 
\|q_h\|_{L^2(B_T^{{in}})} \]
where $B_T^{{in}}$ is the largest ball inscribed in $T$. Scaling the ball
$B_T$ to a ball of radius 1 $B_1$ and considering all the possible positions
of the inscribed ball, the last inequality can be rewritten as
\[ \|q_h \|_{L^{\infty} (B_1)} \leq C \min_{B^{{in}} \subset B_1
   , B^{{in}} \text{ a ball of radius } \geq \rho_1^{-1}} \|q_h
   \|_{L^2 (B_T^{{in}})} \]
This is valid for any polynomial of degree $l$ by equivalence of norms.

The remaining inverse inequality (\ref{H2inv}) can be proven similarly:
\[ | v_h |_{H^2 (T)} 
   \leq Ch^{d / 2 }_T |v_h |_{W^{2,\infty} (B_T)}
   \leq Ch^{d / 2 -1}_T |v_h |_{W^{1,\infty} (B_T)}
   \leq \frac{C}{h_T} |v_h|_{H^1 (B_T^{{in}})} 
   \leq\frac{C}{h_T} |v_h|_{H^1 (T)} \]
\qed\end{proof}

\section{Implementation and numerical results}
\label{sec5}

We shall illustrate the convergence of SIP and scSIP methods on polygonal meshes obtained by agglomerating the 
cells of a background triangular mesh. Both the mesh construction and the following calculations are done in FreeFEM++ \cite{freefem}. An example of such a mesh is given in Fig. \ref{FigMesh}. To construct it, we take a positive integer $n$ ($n=4$ in the Figure), let FreeFEM++ to construct a Delaunay triangulation of $\Omega=(0,1)^2$ with $4n$ boundary nodes on each side of the square, and finally agglomerate the triangles of this mesh into $n\times n$ cells as follows. We start by attributing the triangle containing the point 
\begin{equation}\label{Ocenter}
O_{i+jn}=\left(\frac{i-1/2}{n}, \frac{j-1/2}{n}\right),\quad i,j=1,\ldots,n 
\end{equation}
to the cell number $i+jn$. Then, iteratively, we run over all the cells and attach yet unattributed triangles neighboring a triangle from a cell to the same cell, until all the triangles are attributed. 

\begin{figure}[tbp]
\centerline{
\includegraphics[width=0.6\textwidth]{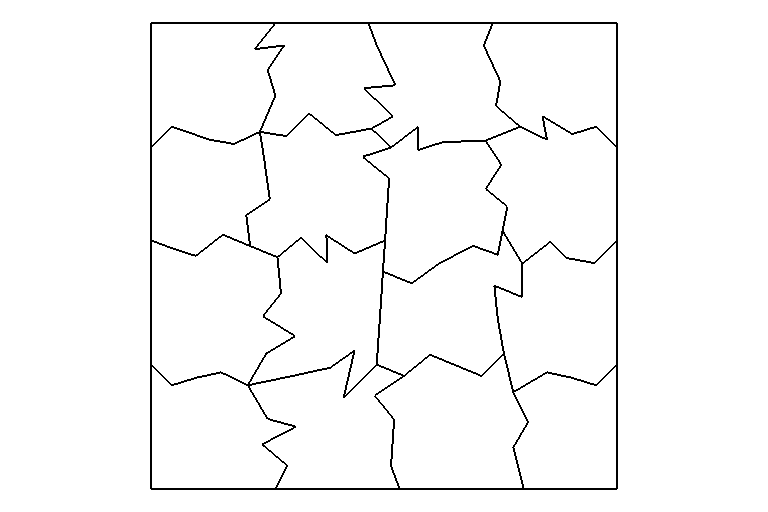}
\hspace{-12mm}
\includegraphics[width=0.6\textwidth]{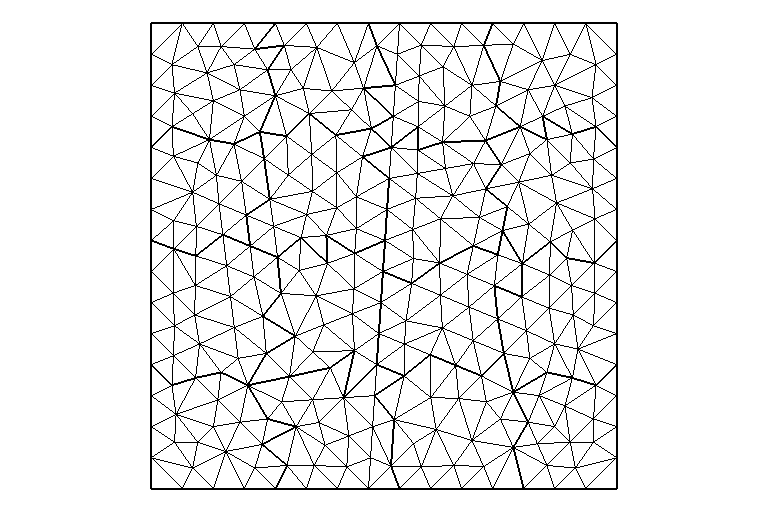}
}
\caption{On the left, a polygonal mesh consisting of $4 \times 4$ cells, which are obtained by the agglomeration of the triangles of a finer mesh seen on the right.
}
\label{FigMesh}
\end{figure}

Some details of our implementations are given below, followed by the numerical results on two test cases. 

\subsection{Implementation of SIP ans scSIP methods}\label{secImpl} 
\new{Let us enumerate the mesh cells as $\{ T_1, \ldots, T_{N_e}
\}$ and introduce a basis $\{ \phi^{(l)}_i \}_{i = 1, \ldots, N_k}$ of
$\mathbb{P}_k (T_l)$ on every cell $T_l$. Here and below, $N_e$ is the number of cells in $\Th$ and $N_k$ denotes the dimension of $\mathbb{P}_k$. In our implementation, we form the basis out of monomials shifted to the ``center'' $O_l=(O_{l,x},O_{l,y})$ of the cell $T_l$, cf. (\ref{Ocenter}). i.e.
$$
\phi^{(l)}_i(x,y)=\phi^{(l)}_{i_1i_2}(x,y)=(x-O_{l,x})^{i_1}(y-O_{l,y})^{i_2}
$$
regrouping the multi-indexes $(i_1,i_2)$, $0\le i_1\le k$, $0\le i_2\le k-i_1$ into a single index $i$ ranging from $1$ to $N_k$.  
We form then the matrices $\mathbf{A}^{(lm)}$ for every pair of cells $T_l$ and $T_m$ sharing some parts of their boundaries. These matrices of size $N_k \times N_k$ represent the bilinear form $a_h$ in our bases and have the following entries
\[ A_{ij}^{(lm)} = a_h (\phi_i^{(l)}, \phi^{(m)}_j) \]
We also compute the right-hand side vectors $\ovec{F}^{(l)}\in\RR^{N_k}$ with $F_i^{(l)} = L_h (\phi_i^{(l)})$ on every cell $T_l$, put all $\ovec{F}^{(l)}$ into a single vector $\ovec{F}$ of size $N_{DOF}=N_eN_k$, put the matrices $\mathbf{A}^{(lm)}$ into the block matrix $\mathbf{A}$ of size $N_{DOF}\times N_{DOF}$, and finally find $\ovec{U}\in\RR^{N_{DOF}}$ as solution to
\[ \mathbf{A} \ovec{U} = \ovec{F} \]
The vector $\ovec{U}$ represents the numerical solution by the SIP method (\ref{PbDG}) in the following sense: decomposing $\ovec{U}$ into the cell-by-cell components $\ovec{U}^{(l)}=\{{U}^{(l)}_i\}\in\RR^{N_k}$,  $u_h$ in (\ref{PbDG}) is given on each cell $T_l$ by $u_h=\sum_{i=1}^{N_k}{U}^{(l)}_i\phi^{(l)}_i$.
}

\new{Turning to the scSIP method, we introduce moreover a basis of $\mathbb{P}_{k - 2} (T_l)$, $\{ \psi^{(l)}_i \}_{i = 1, \ldots, N_{k - 2}}$, and form the matrices $\mathbf{B}^{(l)}$ of size $N_{k-2} \times N_{k}$ on every cell $T_l$ with the entries
\[ B^{(l)}_{ij} = \int_{T_l} \psi^{(l)}_i \mathcal{L} \phi^{(l)}_j = \int_{T_l}
   \psi^{(l)}_i \cdot A \nabla \phi^{(l)}_j - \int_{\partial {T_l}} \psi^{(l)}_i n \cdot A
   \nabla \phi^{(l)}_j \]
These matrices will serve to compute the local contributions in (\ref{pilocDG})  as well as to construct a basis of the space $V_h'$ in (\ref{piglobDG}). As mentioned earlier, the solution to (\ref{pilocDG}) is not unique and one can propose several ways to compute a solution in practice. In our implementation, we have opted for a solution to (\ref{pilocDG}) solving the following saddle-point problem on every cell $T_l$
\begin{align}
\ovec{u}^{(l)} + (\mathbf{B}^{(l)})^T \ovec{p}^{(l)} &= 0 
\notag\\
\mathbf{B}^{(l)} \ovec{u}^{(l)} &= \ovec{F_{\psi}}^{(l)} 
\label{saddle1}
\end{align}
with $\ovec{F_{\psi}}^{(l)}=\{F^{(l)}_{\psi, i}\}\in\RR^{N_{k-2}}$, $F^{(l)}_{\psi, i} = \int_{T_l} f \psi^{(l)}_i$. The unknowns here are $\ovec{u}^{(l)}\in\RR^{N_k}$ and $\ovec{p}^{(l)}\in\RR^{N_{k-2}}$ with $\ovec{u}^{(l)}$ representing $u_h^{loc}$ on $T_l$ in the basis $\{\phi_i^{(l)}\}$. The saddle-point problem above is well posed thanks to Lemma \ref{lemma2}.
}

\new{A basis for $V_h'$ from (\ref{piglobDG})--(\ref{piglobDGprob}) can be constructed on every cell $T_l$ using a saddle-point problem similar to (\ref{saddle1}). Indeed, $V_h'$ on $T_l$ is the kernel of $\mathbf{B}^{(l)}$. It is thus given by the span of vectors $\{\ovec{u}^{(l, 1)},\ldots,\ovec{u}^{(l, N_k)}\}\subset\RR^{N_{k}}$ with $\ovec{u}^{(l, s)}$ defined by
\begin{align}
 \ovec{u}^{(l, s)} + (\mathbf{B}^{(l)})^T \ovec{p}^{(l, s)} &= \ovec{e}^{(s)} 
  \notag\\
 \mathbf{B}^{(l)} \ovec{u}^{(l, s)} &= 0 
\label{saddle2}
\end{align}
where $\{ \ovec{e}^{(1)}, \ldots, \ovec{e}^{(N_k)} \}$ is the canonical basis of
$\mathbb{R}^{N_k}$. In practice, we solve the problem above successively for $s=1,2,\ldots$ and apply the Gram-Schmidt procedure to ortho-normalize the vectors  $\ovec{u}^{(l, s)}$ getting rid of the vectors which turn out to be linearly dependent from the preceding ones. This provides us with a basis for $V_h'|_{T_l}$ consisting of $N_k'=N_k-N_{k-2}$ vectors (actually, the Gram-Schmidt process can be stopped once $N_k'$ ortho-normal vectors have been found). 
}

\begin{remark}\label{RemConstA} 
\new{In the case when the coefficient matrix $A$ is constant (and thus
does not change from one mesh cell to another), the restriction on the
functions in $V_h'$, i.e. $\int_T q_h \mathcal{L}v_h' = 0$ for all $q_h \in
\mathbb{P}_{k - 2} (T)$, implies in fact $\mathcal{L}v_h' = 0$ on every cell
$T$. This is independent from the shape of $T$ so that the structure of $V_h'$ is the same on all the cells, and one can keep the same basis for $V_h'$ everywhere.
}

\new{For example, in the case of Poisson equation ($\mathcal{L}=-\Delta$) in 2D, $v_h$ supported on a cell $T$ is in $V_h'$ if and only if $\Delta v_h=0$. Expanding $v_h$ in the basis of monomials $v_h=\sum_{i_1,i_2}v_{i_1i_2}\phi^{(l)}_{i_1i_2}$ this gives rise to the equations 
$$(i_1+2)(i_1+1)v_{i_1+2,i_2}+(i_2+2)(i_2+1)v_{i_1,i_2+2}=0$$
for all non-negative $(i_1,i_2)$. These equations can be easily solved to provide a basis for $V_h'$ on all the cells.
}

\new{In our implementation, to keep things simple and the code suitable for both cases of either constant $A$ or varying $A$, we have used another strategy: if $A$ is constant, we perform the  Gram-Schmidt ortho-normalization on the solutions to (\ref{saddle2}) on the mesh cell number 1 only. We keep then the same basis (as expressed by the expansion coefficients in $\{\phi_i^{(l)}\}_{i=1,\ldots,N_k}$) on all the other cells $T_l$, $l\ge 2$. }
\end{remark}

\new{Having constructed the basis for $V_h'$, it remains to  solve the global problem (\ref{PbglobDG}). We introduce to this end on every cell $T_l$ the matrices $\mathbf{M}^{(l)}$ of size $N_k \times N_k'$ putting together the vectors representing the basis for $V_h'$ on $T_l$. We form then the reduced matrices $\mathbf{A}^{\prime (lm)}$ of size $N_k' \times N_k'$ and the reduced right-hand side vectors $\ovec{F}^{\prime (l)}$ out of full $\mathbf{A}^{(lm)}$ and $\ovec{F}^{(l)}$ (already introduced in the description of the SIP method implementation) by, cf. (\ref{PbglobDG}),
\[ \mathbf{A}^{\prime (lm)} = (\mathbf{M}^{(l)})^T \mathbf{A}^{(lm)}
   \mathbf{M}^{(m)} \text{ and } \ovec{F}^{\prime (l)} = (\mathbf{M}^{(l)})^T \left(  \ovec{F}^{(l)} - \sum_m \mathbf{A}^{(lm)}  \ovec{u}^{(m)} \right)
\]
with $\ovec{u}^{(m)}$ representing $u_h^{loc}$ on $T_m$ and computed by (\ref{saddle1}).
Putting the matrices $\mathbf{A}^{\prime (lm)}$ into the block matrix $\mathbf{A}'$ of size $N_{DOF}'\times N_{DOF}'$ with $N_{DOF}'=N_eN_k'$ and the vectors $\ovec{F}^{\prime (l)}$ into a single vector $\ovec{F}'$, we compute $\ovec{U}'\in\RR^{N_{DOF}'}$ as solution to
\[ \mathbf{A}' \ovec{U}' = \ovec{F}' \]
The vector $\ovec{U}'$ represents the solution to (\ref{PbglobDG}). The solution $u_h$ by the scSIP method is finally reconstructed as follows: decomposing $\ovec{U}'$ into the cell-by-cell components $\ovec{U}^{\prime (l)}=\{{U}^{\prime (l)}_i\}\in\RR^{N_k'}$, we recall $\ovec{u}^{(l)}$ computed on each cell $T_l$ by (\ref{saddle1}), introduce $\ovec{\widetilde U}^{(l)}=\{{\widetilde U}^{(l)}_i\}\in\RR^{N_k}$ as $\ovec{\widetilde U}^{(l)}=\ovec{u}^{(l)}+\mathbf{M}^{(l)}\ovec{U}^{\prime (l)}$, and set $u_h$ on $T_l$ as $u_h=\sum_{i=1}^{N_k}{\widetilde U}^{(l)}_i\phi^{(l)}_i$.
}

\subsection{The first test case: Poisson equation}

\begin{figure}[tbp]
\centerline{
\includegraphics[width=0.48\textwidth]{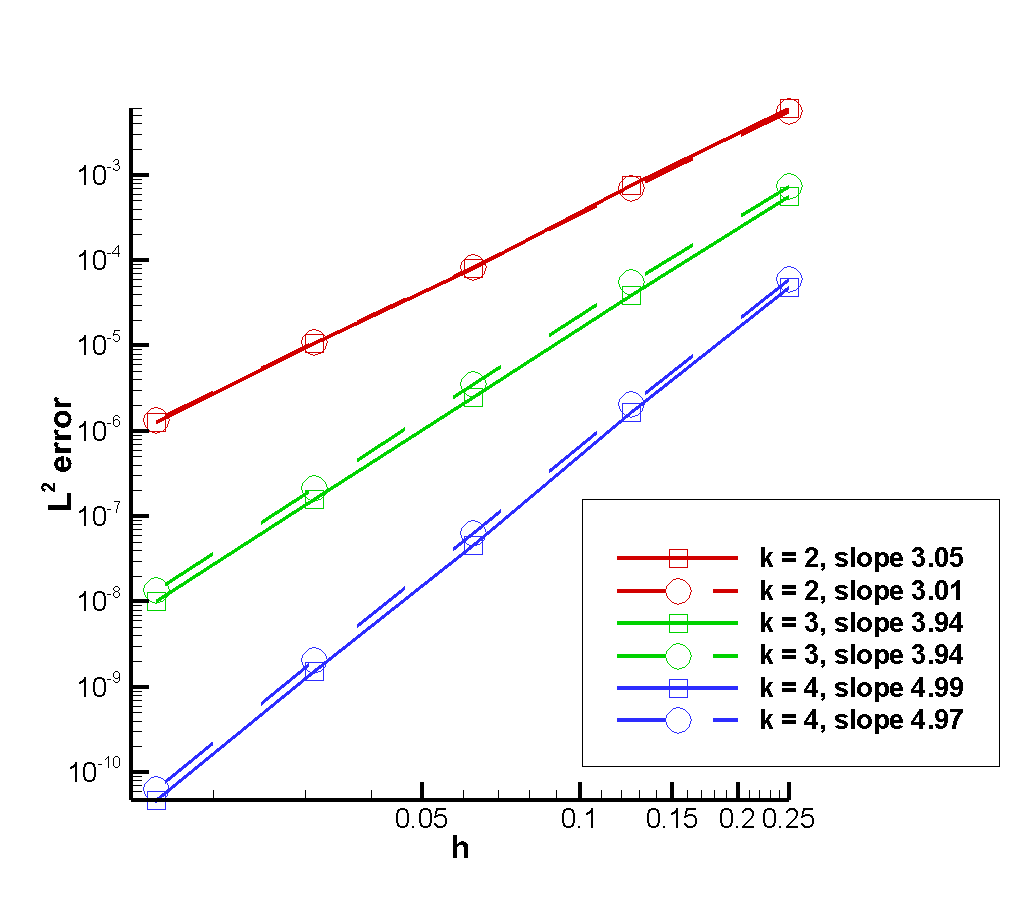}
\quad
\includegraphics[width=0.48\textwidth]{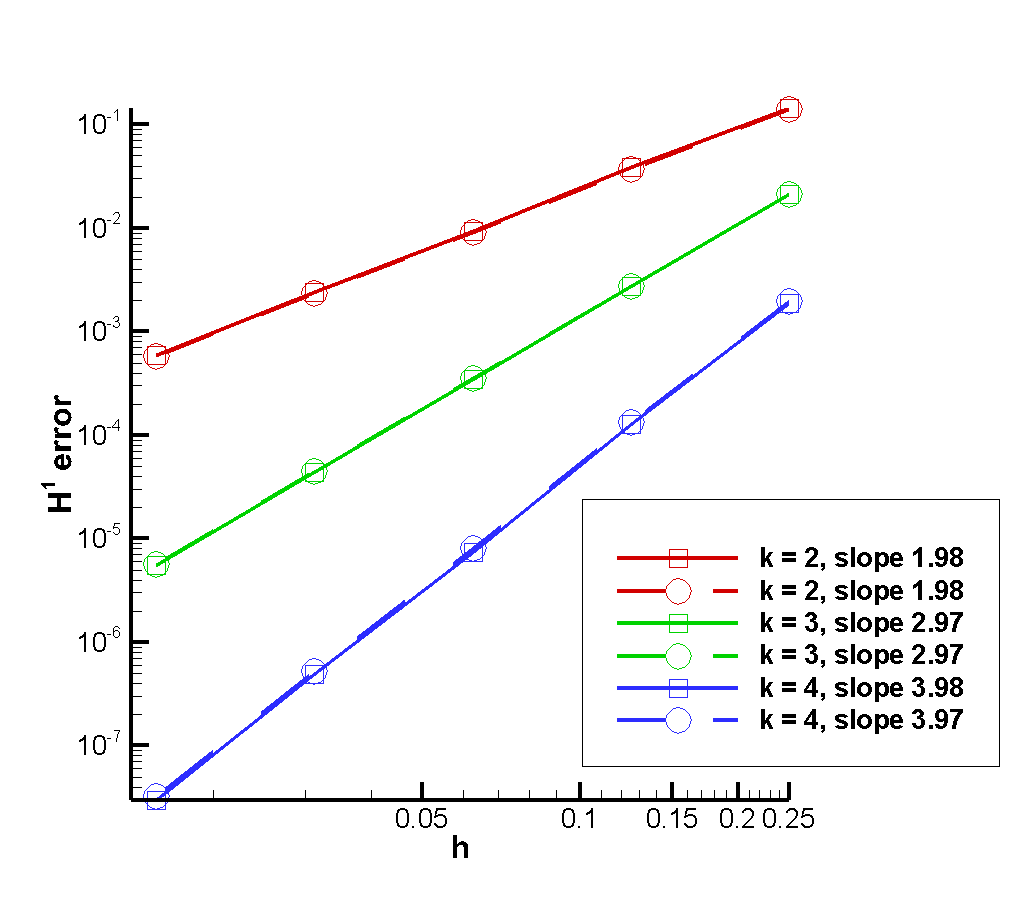}
}
\caption{The test case with Poisson equation: the error in $L^2$ norm and $H^1$ semi-norm vs. mesh-size $h$. The solid lines with squares represent the SIP method. The dashed lines with circles represent the scSIP method. }
\label{FigConv}
\end{figure}

We have considered the Poisson equation, i.e. (\ref{Pb}) with $A=I$, on $\Omega=(0,1)^2$ with homogeneous Dirichlet boundary conditions $g=0$ and the exact solution $u=\sin(\pi x)\sin(\pi y)$. We have applied SIP method (\ref{PbDG}) and scSIP method (\ref{pilocDG})--(\ref{PbglobDG}) to this problem on the agglomerated meshes as described in the preamble of this Section. The results are presented in Fig. \ref{FigConv}. In a slight deviation from the general notations, we set here the mesh-size as $h=1/n$ on the $n\times n$ mesh, and $h_E=h$ on all the edges in (\ref{ah}). Three choices for the polynomial space degree $k$ were investigated, namely $k=2,3,4$ and the penalty parameter $\gamma$ in (\ref{ah}) was set to $2k(k+1)$ (by a loose extrapolation to the polygonal meshes of the bound on the constant in the inverse inequality on a triangle in \cite{warburton03}). The numerical results confirm the theoretically expected order of convergence in both $L^2$ norm and $H^1$ semi-norm. They also demonstrate that the approximation produced by SIP and scSIP methods are very close to each other.

\subsection{The second test case: non-constant coefficients $A$}

\begin{figure}[tbp]
\centerline{
\includegraphics[width=0.48\textwidth]{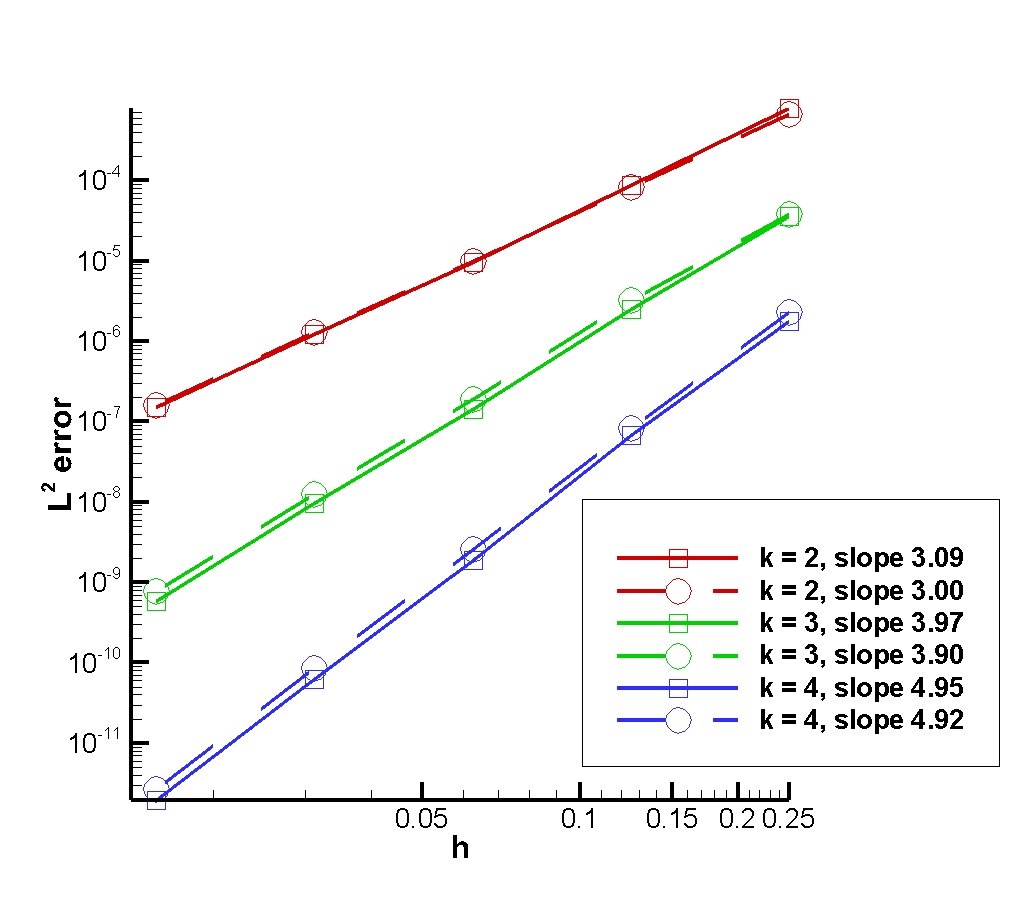}
\quad
\includegraphics[width=0.48\textwidth]{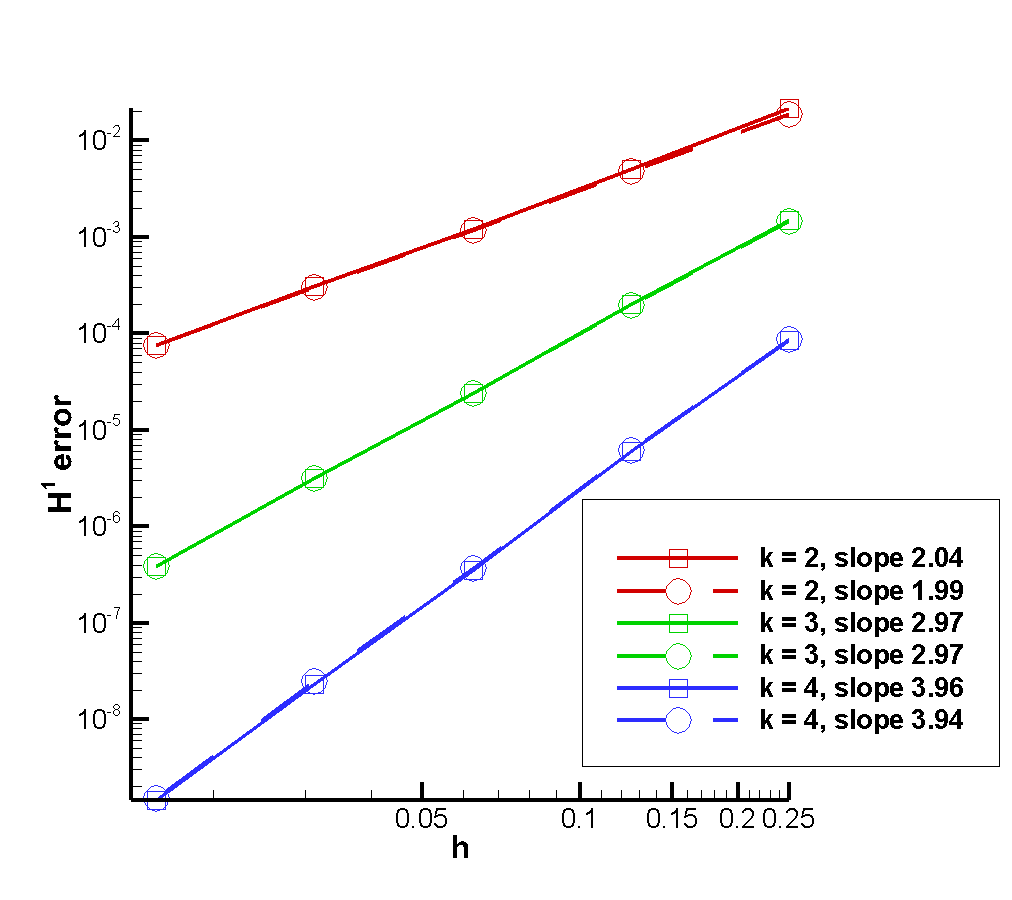}
}
\caption{Test case with the non constant coefficient matrix $A$: the error in $L^2$ norm and $H^1$ semi-norm vs. mesh-size $h$. The solid lines with squares represent the SIP method. The dashed lines with circles represent the scSIP method. }
\label{FigConv2}
\end{figure}

\new{We now consider problem (\ref{Pb}) with a non-constant coefficient matrix
\[ A = \left(\begin{array}{cc}
     1 + x & xy\\
     xy & 1 + y
   \end{array}\right) 
\]
set again on  $\Omega=(0,1)^2$. The right-hand side $f$ and non-homogeneous Dirichlet boundary conditions $g$ are chosen so that the exact solution is given by $u=e^{xy}$. The results are presented in Fig. \ref{FigConv2} using the same meshes and parameters $h$, $h_E$, and $\gamma$ as in the first test case. We arrive at the same conclusions about the convergence of SIP and scSIP methods as before.   }

\begin{acknowledgements}
I am grateful to Simon Lemaire for interesting discussions about HHO and msHHO methods, which were the starting point of conceiving the present article.
\end{acknowledgements}

\bibliographystyle{spmpsci}      
\bibliography{DG}   

\end{document}